\numberwithin{equation}{section}
\theoremstyle{plain}
\newtheorem{prop}{Proposition}[section]
\newtheorem{coro}[prop]{Corollary}
\newtheorem{lemm}[prop]{Lemma}
\newtheorem{theorem}[prop]{Theorem}
\newtheorem*{thrmA}{Theorem A}
\newtheorem*{thrmB}{Theorem B}
\theoremstyle{definition}
\newtheorem{defi}[prop]{Definition}
\newtheorem{exam}[prop]{Example}
\newtheorem{rema}[prop]{Remark}
\renewcommand\aa{a}
\newcommand\AAA{\mathcal{A}}
\newcommand\AD{\mathcal{A}_{\DDD}}
\newcommand\bb{b}
\newcommand\BBB{B}
\newcommand\bl{\vdash}
\newcommand\br{\dashv}
\newcommand\DDD{\mathcal{D}}
\newcommand\ep{\varepsilon}
\newcommand\ff{f}
\newcommand\GK{Gelfand-Kirillov dimension}
\newcommand\gkd[1]{\mathsf{GKdim}(#1)}
\newcommand\gkdim[1]{\mathsf{GKdim}#1}
\newcommand\HS[1]{\leavevmode\null\hspace{#1mm}}
\newcommand\id{\mathsf{Id}}
\newcommand\Id[1]{\id(#1)}% ideal
\newcommand\ii{i}
\newcounter{ITEM}
\newcommand\ITEM[1]{\setcounter{ITEM}{#1}\leavevmode\hbox{\rm(\roman{ITEM})}}
\newcommand\jj{j}
\newcommand\kk{k}
\renewcommand\ll{l}
\newcommand\mm{m}
\newcommand\NBd[2]{[#1]_{#2}} % like [u], short for NLSW bracket
\newcommand\nn{n}
\newcommand\qq{q}
\newcommand\pdots{\mathrel{\HS{0.2}{\cdot}{\cdot}{\cdot}\HS{0.2}}}
\newcommand\pp{p}
\newcommand\rr{r}
\renewcommand\SS{S}
\newcommand\Span{\mathsf{span}}
\renewcommand\tt{t}
\newcommand\uu{u}
\newcommand\vv{v}
\newcommand\VVV{V}
\newcommand\wdots{, ...\HS{0.2}, }
\newcommand\ww{w}
\newcommand\xx{x}
\newcommand\XXX{X}
\newcommand\Xomega{[\XXX^+]_\omega}
\newcommand\YYY{Y}
\newcommand\yy{y}
\newcommand\zz{z}
\title{No dialgebra has Gelfand-Kirillov dimension strictly between 1 and 2$^{\ddag}$}
\author{Zerui Zhang$^*$}
\address{Z.Z., School of Mathematical Sciences, South China Normal University, Guangzhou 510631, P. R. China}
\email{\small 295841340@qq.com}
\author{Yuqun Chen$^{\sharp}$}
\address{Y.C., School of Mathematical Sciences, South China Normal University, Guangzhou 510631, P. R. China}
\email{yqchen@scnu.edu.cn}
\author{Bing Yu$^{\dagger}$}
\address{Y.B., School of Mathematical Sciences, South China Normal University Guangzhou 510631, P. R. China}
\email{\small 449224830@qq.com}
\thanks{${}^{\ddag}$ Supported by the NNSF of China (11571121), the NSF of Guangdong Province (2017A030313002) and the Science and Technology Program of Guangzhou (201707010137)}
\thanks{${}^*$ Supported by the Innovation Project of Graduate School of South China Normal University}
\thanks{${}^{\sharp}$ Corresponding author}
\keywords{associative algebra; dialgebra; \GK}
\subjclass{16S15, 16P90, 17A30}
\begin{document}

\begin{abstract}
The Gelfand-Kirillov dimension measures the asymptotic growth rate of algebras. For every associative dialgebra~$\DDD$,  the quotient~$\AAA_\DDD:=\DDD/\Id\SS$, where~$\Id\SS$ is the ideal of~$\DDD$ generated by the set~$\SS:=\{\xx\bl\yy-\xx\br\yy\mid \xx,\yy\in \DDD\}$,  is called the associative algebra associated to~$\DDD$. Here we show that the \GK\ of~$\DDD$ is bounded above by twice the \GK\ of~$\AAA_\DDD$. Moreover, we prove that no associative dialgebra has Gelfand-Kirillov dimension strictly between 1 and~2.
\end{abstract}
\maketitle
%%%%
\section{Introduction}
The Gelfand-Kirillov dimension has become one of the important tools in the study of algebras. There are many well known results on \GK s of associative algebras and modules~\cite{book}. For instance, the Gelfand-Kirillov dimension of a finitely generated commutative algebra~$\AAA$ over a field is the classical Krull dimension of~$\AAA$~\cite[Theorem 4.5]{book}.  In particular, the \GK\ of the free commutative algebra generated by~$n$ elements is exactly~$n$, where~$n$ is a positive integer.
 Here we shall investigate the \GK s of associative dialgebras.

Recall that an \emph{associative dialgebra} (dialgebra for short)~$(\DDD, \bl,\br)$ over a field~$k$ is a~$k$-vector space equipped with two bilinear operations
$\bl : \DDD\otimes \DDD\rightarrow \DDD$ and
$\br \ : \DDD\otimes \DDD\rightarrow \DDD$ such that
 $(\DDD,\bl)$ and $(\DDD,\br)$ are associative algebras and the following identities hold:
\begin{equation}
%\begin{displaymath}
\begin{cases}
\xx \br(\yy \bl \zz)=\xx\br (\yy\br \zz),\\
(\xx\br \yy)\bl \zz=(\xx\bl \yy)\bl \zz, \\
\xx\bl(\yy\br \zz)=(\xx\bl \yy)\br \zz,
\end{cases}
\end{equation}
for all~$\xx, \yy$ and~$\zz$ in~$\DDD$. For instance, let~$(\AAA,\partial)$ be a differential associative algebra satisfying~$\partial^2=0$, and define~$\xx\bl\yy=\partial(\xx) y$ and~$\xx\br\yy=\xx\partial(\yy)$. Then~$(\AAA,\bl,\br)$ becomes a dialgebra~\cite{loday}.

Let~$\DDD$ be a dialgebra and let~$\AAA_\DDD=\DDD/\Id\SS$, where~$\Id\SS$ is the ideal of~$\DDD$ generated by the set~$\SS:=\{\xx\bl\yy-\xx\br\yy\mid \xx,\yy\in \DDD\}$. Then~$\AAA_\DDD$ is an associative algebra, which is called the associative algebra associated to~$\DDD$.  We find that
the \GK~$\gkd\DDD$ of~$\DDD$ and the \GK~$\gkd\AD$ of~$\AD$ have the following relation:
\begin{thrmA}\label{THA}
 Let $\DDD$ be a dialgebra and let~$\AD$ be the associative algebra associated to~$\DDD$.    Then we have
$$
\gkd\AD\leq\gkd\DDD\leq2\gkd\AD.
$$
In particular, the inequality $\gkd\DDD<\infty$ holds if and only if~$\gkd\AD<\infty$ holds.
\end{thrmA}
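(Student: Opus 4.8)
The plan is to exhibit $\DDD$ as a bimodule over its associated algebra that is generated by a finite-dimensional space, and then to read off the growth of $\DDD$ from that of a tensor square. Write $\pi\colon\DDD\to\AD$ for the canonical surjection and let $F_n$ denote the span of all products of at most $n$ generators. The left inequality is immediate: $\AD$ is a homomorphic image of $\DDD$ and the \GK\ never increases under homomorphic images, so $\gkd\AD\le\gkd\DDD$. For the right inequality I would first use the definition of \GK\ as a supremum over finitely generated sub-dialgebras to reduce to a finitely generated $\DDD=\langle\VVV\rangle$ with $\VVV$ finite dimensional; the relevant associative algebra is then the finitely generated subalgebra $\AAA_0:=\pi(\DDD)\subseteq\AD$, for which $\gkd{\AAA_0}\le\gkd\AD$, and it suffices to prove $\gkd\DDD\le 2\gkd{\AAA_0}$.

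The structural input is the \gsb\ normal form for diassociative algebras: every element of $\DDD$ is a linear combination of \emph{pointed monomials} $g_1\bl\cdots\bl g_j\br\cdots\br g_n$, a word $g_1\cdots g_n$ in the generators with a distinguished position $j$, to the left of which one multiplies with $\bl$ and to the right of which one multiplies with $\br$. Using the three defining identities I would verify that $a\bl(-)$ and $(-)\br a$ depend only on $\pi(a)$, so that $\DDD$ becomes an $\AAA_0$-bimodule. The displayed pointed monomial is then nothing but $\pi(g_1\cdots g_{j-1})\bl g_j\br\pi(g_{j+1}\cdots g_n)$, a two-sided translate of the single generator $g_j\in\VVV$; hence $\DDD$ is generated as an $\AAA_0$-bimodule by $\VVV$, yielding a surjection of filtered spaces $\AAA_0^1\otimes\VVV\otimes\AAA_0^1\twoheadrightarrow\DDD$, $p\otimes v\otimes s\mapsto p\bl v\br s$, where $\AAA_0^1$ is the unitalization (allowing an empty prefix or suffix).

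This surjection is the crux, and it must be used with the correct degree bookkeeping. If $p\in F_a\AAA_0$ and $s\in F_b\AAA_0$ then $p\bl v\br s\in F_{a+b+1}\DDD$, so $F_n\DDD$ is the image of $\sum_{a+b\le n-1}(F_a\AAA_0)\otimes\VVV\otimes(F_b\AAA_0)$. The essential point is that the two copies of $\AAA_0$ carry complementary degrees summing to at most $n-1$: this space has dimension $(\dim\VVV)\cdot\dim F_{n-1}(\AAA_0\otimes\AAA_0)$, not the product of two full filtration pieces of $\AAA_0$. Hence $\dim F_n\DDD\le(\dim\VVV)\cdot\dim F_{n-1}(\AAA_0\otimes\AAA_0)$, and taking $\limsup$ of $\log(\cdot)/\log n$ gives $\gkd\DDD\le\gkd{\AAA_0\otimes\AAA_0}$. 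The standard estimate $\gkd{\AAA_0\otimes\AAA_0}\le 2\gkd{\AAA_0}$ for a tensor product of finitely generated algebras over a field, together with $\gkd{\AAA_0}\le\gkd\AD$, then yields $\gkd\DDD\le 2\gkd\AD$.

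The step I expect to be most delicate is the well-definedness of the one-sided multiplications on $\AAA_0$, that is, checking that the dialgebra axioms force $a\bl(-)$ and $(-)\br a$ to factor through $\pi$, so that the $\AAA_0$-bimodule structure and the surjection $\AAA_0^1\otimes\VVV\otimes\AAA_0^1\twoheadrightarrow\DDD$ genuinely exist. Paired with this, it is precisely the degree-complementarity of the two $\AAA_0$-factors that produces the factor $2$ rather than a weaker bound such as $1+\gkd\AD$. Finally, this argument covers the finite-dimensional case uniformly: when $\gkd\AD=0$ the algebra $\AAA_0\otimes\AAA_0$ is finite dimensional, hence so is $\DDD$, and $\gkd\DDD=0=2\gkd\AD$.
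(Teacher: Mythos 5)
Your proposal is correct, and underneath the bimodule packaging it is essentially the paper's argument. The left inequality is handled identically (homomorphic images do not increase the \GK), and the reduction to finitely generated subalgebras is the paper's Corollary~\ref{finite-formula}. The step you rightly flag as the crux --- that $a\bl(-)$ and $(-)\br a$ factor through $\pi$ --- is exactly the paper's Lemma~\ref{zero-divisor}, and it does go through: by the bar identities, $\ss\bl\zz=\zz\br\ss=0$ for every $\ss=\xx\bl\yy-\xx\br\yy$, so $\Id\SS$ is spanned by elements of the form $\zz\bl\ss\br\zz'$ (with $\zz,\zz'$ possibly absent), and each of these is a left zero divisor for $\bl$ and a right zero divisor for $\br$. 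Your surjection $\AAA_0\otimes\VVV\otimes\AAA_0\twoheadrightarrow\DDD$ then plays the role of the paper's Lemma~\ref{subword-in-a}, which records that a basis monomial $[\aa_1...\aa_t]_\pp$ of $\DDD$ has its prefix $[\aa_1...\aa_{\pp-1}]_1$ and suffix $[\aa_{\pp+1}...\aa_t]_1$ in the basis of the associated associative algebra; your count $\dim F_n\DDD\leq\dim\VVV\cdot(\dim F_{n-1}\AAA_0)^2$ is literally the paper's estimate $|\BBB_{\DDD'}^{\leq\nn}|\leq|\XXX'|(|\BBB_{\AAA'}^{\leq\nn}|+1)^{2}$. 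What your version buys is that, for Theorem~A alone, one never needs to construct the explicit shortest-middle-lexicographic basis; the paper builds that machinery anyway because Theorem~B requires fine control over which pointed monomials survive. One small correction to a side remark: the degree-complementarity of the two $\AAA_0$-factors is not what produces the factor $2$ --- the crude bound by the product of two full filtration pieces of $\AAA_0$ already gives $2\gkd{\AAA_0}$ after taking $\overline{\lim}\log_\nn$ (and $1+\gkd\AD$ would be a \emph{stronger} bound when $\gkd\AD\geq1$, not a weaker one); the complementarity only matters if one wants the sharper bound $\gkd{\AAA_0\otimes\AAA_0}$, which can be strictly smaller than $2\gkd{\AAA_0}$.
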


For every associative algebra~$(\AAA, \cdot)$, we define~$\xx\bl\yy=\xx\br\yy=\xx\cdot\yy$ for all~$\xx,\yy$ in~$\AAA$. Then it is clear that~$(\AAA,\bl,\br)$ becomes a dialgebra. It is well-known that for every real number~$\rr\geq 2$, there exists an associative algebra~$\AAA$ satisfying~$\gkd\AAA=r$,  so for every real number~$\rr\geq 2$, there exists a dialgebra~$\DDD$ satisfying~$\gkd\DDD=r$. Bergman~\cite{Bergman} proved that there is no associative algebra having Gelfand-Kirillov dimension in the open interval~$(1, 2)$.  C. Martinez and E. Zelmanov~\cite{zelmanov} proved an analogous theorem for Jordan algebras.  However, for Lie algebras and for Jordan superalgebras, the situations are quite different: the
Gelfand--Kirillov dimension of a finitely generated Lie algebra~\cite{lie-growth} or of a Jordan superalgebra~\cite{Jordan-super-growth}  can be an arbitrary
number in~$\{0\}\cup[1,\infty]$. This leaves the following problem open: Does there exist a dialgebra of \GK\ in the open interval~$(1,2)$? Based on Bergman's result, we prove the following result:
\begin{thrmB}\label{THB}
 No dialgebra has  Gelfand-Kirillov dimension  strictly between 1 and 2.
\end{thrmB}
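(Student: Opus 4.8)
The plan is to combine Theorem A with Bergman's gap theorem. By Theorem A, we have the two-sided bound $\gkd\AD\leq\gkd\DDD\leq2\gkd\AD$ relating the dimension of a dialgebra to that of its associated associative algebra. Suppose for contradiction that there is a dialgebra $\DDD$ with $1<\gkd\DDD<2$. I would extract constraints on $\gkd\AD$ from these inequalities and then invoke Bergman's theorem, which asserts that no associative algebra has Gelfand-Kirillov dimension in the open interval $(1,2)$.

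Let me set this up carefully. Write $d=\gkd\DDD$ and $a=\gkd\AD$, and assume $1<d<2$. The left inequality $a\leq d$ gives $a<2$. The right inequality $d\leq 2a$ gives $a\geq d/2>1/2$. So far this only forces $1/2<a<2$, which is not yet a contradiction, since $a$ could lie in $[0,1]$. The key additional observation I would make is that $a$ cannot fall strictly below $1$ while $d$ stays above $1$. Indeed, if $a<1$ then by Bergman's theorem (together with the classical fact that the only achievable values of $\gkd$ below $1$ are $0$ and nothing in $(0,1)$, so $a\in\{0\}\cup[1,2)$ is the relevant constraint) we would need $a=0$; but $a=0$ forces $d\leq 2a=0$, contradicting $d>1$. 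Hence $a\geq 1$, and combined with $a<2$ and Bergman's gap $(1,2)$, we are squeezed to $a=1$.

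Now with $a=1$, the right-hand inequality gives $d\leq 2$. This alone does not contradict $d<2$, so the crux is to rule out $1<d<2$ when $a=1$. Here I expect the main obstacle to lie: the bound $\gkd\DDD\leq 2\gkd\AD$ from Theorem A is likely not tight enough on its own, and I anticipate needing a sharper structural input in the boundary case $a=1$. The natural strategy is to revisit the proof of Theorem A and observe that when $\AD$ has linear growth (GKdim exactly $1$), the word combinatorics controlling the growth of $\DDD$ degenerate: a dialgebra word is essentially an associative word with a single distinguished "bar position," so the number of spanning words of $\DDD$ up to degree $n$ should grow like $n$ times the count for $\AD$. When $\AD$ has GKdim $1$, this gives a bound of the form (polynomial growth of degree $2$ or integer-controlled growth) for $\DDD$, so that $\gkd\DDD$ is forced to be an integer, hence equal to $1$ or $2$ and never strictly between.

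Accordingly, I would structure the final argument in two movements: first the clean squeeze using Theorem A and Bergman to reduce to $a=1$; then a refinement showing that linear growth of $\AD$ propagates to \emph{integer} growth of $\DDD$, excluding the open interval $(1,2)$. The hardest step is this refinement, and I would prove it by tracking the basis of $\DDD$ in terms of the basis of $\AD$ via the generating set $\SS$, showing that each element of $\AD$ of length at most $n$ lifts to at most $O(n)$ elements of $\DDD$ of comparable length, so $\dim\DDD_{\leq n}=O(n\cdot\dim(\AD)_{\leq n})$, and then checking that linear growth on the right yields quadratic (hence integer-exponent) growth on the left. This dichotomy—growth of $\DDD$ is either linear or already quadratic—closes the gap and completes the proof of Theorem B.
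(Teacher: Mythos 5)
Your first movement---using Theorem A, Bergman's gap, and the fact that no associative algebra has Gelfand--Kirillov dimension in $(0,1)$ to squeeze $\gkd\AD$ down to $1$---is correct and consistent with the paper. The genuine gap is in your second movement, which you rightly flag as the hardest step but do not actually resolve. The estimate you propose, $\dim\VVV^{\leq n}=O\bigl(n\cdot\dim\text{(the corresponding space in }\AD)\bigr)$, obtained by letting the bar position range over all $n$ slots of an associative word, is exactly the counting that already underlies Theorem A's inequality $\gkd\DDD\leq2\gkd\AD$. With $\gkd\AD=1$ it yields only $\dim\VVV^{\leq n}=O(n^2)$, i.e.\ $\gkd\DDD\leq2$, which you already had from the first movement. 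An $O(n^2)$ upper bound is perfectly consistent with growth like $n^{3/2}$, so the concluding assertion that ``growth of $\DDD$ is either linear or already quadratic'' is not a consequence of anything you have proved; it is a restatement of the theorem in the case $\gkd\AD=1$.

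The missing idea is a dialgebra analogue of Bergman's own combinatorial argument, not a black-box application of his result. Assuming $\gkd\DDD<2$ for a finitely generated $\DDD$, the paper first finds an integer $\mm$ with $|\BBB_\DDD^{\mm}|<\mm$ (otherwise the growth would be at least $1+2+\cdots+n$). Then, for any monomial $[\aa_1...\aa_\tt]_\pp$ whose middle entry lies farther than $\mm$ from both ends, the $\mm$ middle submonomials of length $\mm$ centred near position $\pp$, together with $\BBB_\DDD^{\leq\mm-1}$, must be linearly dependent; substituting that dependence back via Lemma~\ref{product} and Lemma~\ref{monomial-order} rewrites $[\aa_1...\aa_\tt]_\pp$ as a combination of strictly smaller monomials of $\Xomega$, so it cannot belong to $\BBB_\DDD$. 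Hence every basis monomial has its middle entry within distance $\mm$ of an end, the ``number of bar positions'' collapses from $n$ to the constant $2\mm$, and Lemma~\ref{special-basis} gives $\gkd\DDD=\gkd\AD$; Bergman's gap then finishes the proof. Without an argument of this kind---some mechanism converting the hypothesis $\gkd\DDD<2$ into a uniform restriction on where the middle entry can sit---your proposal establishes nothing beyond Theorem A.
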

Although Theorem B is based on the corresponding result of Bergman, the extension is not obvious because really different techniques are needed here, among which constructing connected linear bases for~$\DDD$ and~$\AD$ simultaneously plays an important role. In general, assuming~$\gkd{\AD}=1$ does not imply~$\gkd\DDD=1$. We shall provide a sufficient condition in Lemma~\ref{special-basis} ensuring~$\gkd\DDD=\gkd\AD$. Moreover, we shall show that, for~$\gkd\AD=1$, the sufficient condition is satisfied.

The paper is organized as follows. In Section~\ref{sec-gk-defi}, we recall some basic properties of a dialgebra and the definition of the \GK.  We then provide a formula, involving the generators of a dialgebra, for calculating the \GK \ (Lemma~\ref{gk-generator}). In Section~\ref{sec-main-result}, we introduce how to construct a shortest-middle-lexicographic (linear) basis of a dialgebra (Definition~\ref{defi-basis}). Such (linear) bases turn out to be very useful in connecting the \GK\ of a dialgebra and that of its associated associative algebra, and they play important roles in the proofs of Theorem A and Theorem B. Finally, to prove Theorem B, we also need to investigate the middle entries (Definition~\ref{defi-mid-ent}) of monomials in the constructed shortest-middle-lexicographic (linear) bases.

\section{The \GK\ of a dialgebra}\label{sec-gk-defi}
Our aim in this section is to recall the notion of the \GK\ of a dialgebra.  We shall first recall some basic notations and properties of a dialgebra. Then we shall introduce several formulas for the \GK\ of a dialgebra.

Recall that for every dialgebra~$\DDD$, for all~$\xx_1\wdots\xx_t$ in~$\DDD$, every parenthesizing of
$$
\xx_1\bl\cdots\bl\xx_m\br\cdots\br\xx_t
$$
 gives the same element in~$\DDD$~\cite{loday}, which we denote by~$[\xx_1...\xx_t]_m$ or~${\xx_1\bl\cdots\bl\xx_m\br\cdots\br\xx_t}$. The following definition of middle submonomials of a monomial  resembles that of subwords of a word.
 \begin{defi}\cite{loday}\label{defi-mid-ent}
For all~$\xx_1\wdots\xx_t$ in~$\DDD$, the element~$\xx_m$ in the monomoial~$[\xx_1...\xx_t]_m$ is called the \emph{middle entry} of~$[\xx_1...\xx_t]_m$, and every monomial~$[\xx_p...\xx_q]_{m-p+1}$ satisfying~$1\leq p\leq m\leq q\leq t$ is called a \emph{middle submonomial} of~$[\xx_1...\xx_t]_m$.
 \end{defi}

 The following formulas for calculating the product of two monomials in a dialgebra will be very useful in the sequel. Roughly speaking, the middle entry of the product of two monomials is the middle entry that the operation~$\bl$ or~$\br$ points to. More precisely, we have the following lemma:
\begin{lemm}\emph{\cite{loday}}\label{product}
For all integers~$\mm,\nn,\pp,\qq$ such that~$1\leq\pp\leq \nn<\mm$ and~$1\leq \qq\leq m-n$,
for all~$\xx_1\wdots\xx_m$ in a dialgebra~$\DDD$,  we have the following formulas:

\ITEM1 $[\xx_1...\xx_n]_p \bl [\xx_{n+1}...\xx_m]_q=[\xx_1...\xx_m]_{n+q}$.

\ITEM2 $[\xx_1...\xx_n]_p \br [\xx_{n+1}...\xx_m]_q=[\xx_1...\xx_m]_{p}$.
\end{lemm}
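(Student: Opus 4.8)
The plan is to prove both formulas by repeatedly peeling off an extremal factor and transporting it across the product by means of the three defining identities of a dialgebra, using throughout that every parenthesizing of $\xx_1\bl\cdots\bl\xx_m\br\cdots\br\xx_t$ represents the same element $[\xx_1...\xx_t]_m$.

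First I would isolate four single-factor ``absorption'' rules describing how a monomial $[\xx_1...\xx_k]_j$ interacts with one extra element $\yy$:
\[
[\xx_1...\xx_k]_j\br \yy=[\xx_1...\xx_k\yy]_j,\qquad
\yy\bl[\xx_1...\xx_k]_j=[\yy\xx_1...\xx_k]_{j+1},
\]
\[
[\xx_1...\xx_k]_j\bl \yy=[\xx_1...\xx_k\yy]_{k+1},\qquad
\yy\br[\xx_1...\xx_k]_j=[\yy\xx_1...\xx_k]_1.
\]
The first two are immediate from parenthesizing-independence together with the associativity of $\br$ and of $\bl$ and the identity $\xx\bl(\yy\br\zz)=(\xx\bl\yy)\br\zz$: in each the new operation acts on the same side as the existing ones nearest the join, so the middle entry is either unchanged or merely shifted by one. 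The last two carry the actual content of the axioms. Appending with $\bl$ forces every $\br$ of $[\xx_1...\xx_k]_j$ to become a $\bl$, which is accomplished by peeling off the rightmost factor and applying $(\xx\br\yy)\bl\zz=(\xx\bl\yy)\bl\zz$ once per $\br$; dually, prepending with $\br$ turns every $\bl$ into a $\br$ via $\xx\br(\yy\bl\zz)=\xx\br(\yy\br\zz)$. Each of these is a short induction on the number of operations of the ``wrong'' type, and I expect this to be the main obstacle: it is exactly here that the two ``$\bl\leftrightarrow\br$-converting'' identities do their work, and the migration of the middle-entry index to the newly appended (resp.\ prepended) end must be tracked with care.

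With the absorption rules in place, formula~(i) follows by induction on the length $m-n$ of the second factor $B=[\xx_{n+1}...\xx_m]_q$, peeling off its last element $\xx_m$. If $\xx_m$ lies in the $\br$-part of $B$, then $B=[\xx_{n+1}...\xx_{m-1}]_q\br \xx_m$, and $\xx\bl(\yy\br\zz)=(\xx\bl\yy)\br\zz$ moves the outer $\bl$ inside, so the induction hypothesis and the append-by-$\br$ rule give $[\xx_1...\xx_n]_p\bl B=[\xx_1...\xx_{m-1}]_{n+q}\br \xx_m=[\xx_1...\xx_m]_{n+q}$. If instead $\xx_m$ is the middle entry of $B$, then $B=[\xx_{n+1}...\xx_{m-1}]_{q-1}\bl \xx_m$, and associativity of $\bl$ together with the append-by-$\bl$ rule yields $[\xx_1...\xx_m]_m=[\xx_1...\xx_m]_{n+q}$, the two indices agreeing because here $q=m-n$; the base case $m-n=1$ is the append-by-$\bl$ rule itself. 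Formula~(ii) is proved symmetrically, by induction on the length $n$ of the first factor, peeling off its first element $\xx_1$ and invoking either $\xx\bl(\yy\br\zz)=(\xx\bl\yy)\br\zz$ (when $\xx_1$ is in the $\bl$-part) or associativity of $\br$ (when $\xx_1$ is the middle entry), together with the two prepend rules; its base case $n=1$ is the prepend-by-$\br$ rule. In every step the middle-entry index transforms precisely as the direction of the applied operation dictates, which is the assertion of the two formulas.
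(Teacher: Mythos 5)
Your proof is correct, but there is nothing in the paper to compare it against: the paper states this lemma with a citation to Loday and supplies no proof of its own, taking for granted both the parenthesizing-independence of $\xx_1\bl\cdots\bl\xx_m\br\cdots\br\xx_t$ and the resulting multiplication formulas. Your argument is the standard self-contained derivation from the axioms, and the decomposition into the four absorption rules is sound. The two ``easy'' rules ($[\xx_1...\xx_k]_j\br\yy=[\xx_1...\xx_k\yy]_j$ and $\yy\bl[\xx_1...\xx_k]_j=[\yy\xx_1...\xx_k]_{j+1}$) are indeed immediate, since the appended operation already points toward the existing middle entry and one merely chooses a suitable parenthesizing; the other two rules correctly isolate where the bar-switching identities $(\xx\br\yy)\bl\zz=(\xx\bl\yy)\bl\zz$ and $\xx\br(\yy\bl\zz)=\xx\br(\yy\br\zz)$ are needed, each by induction on the number of operations of the wrong type. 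The inductions for (i) and (ii) then close correctly: in (i) the case $\qq<\mm-\nn$ uses $\xx\bl(\yy\br\zz)=(\xx\bl\yy)\br\zz$ to move the outer $\bl$ past the last $\br$, and the case $\qq=\mm-\nn$ uses associativity of $\bl$, with the index bookkeeping $\nn+\qq=\mm$ checking out; (ii) is the exact mirror. One small presentational point: it would be worth saying explicitly that the whole argument presupposes Loday's normalization result (that all parenthesizings agree), since without it the symbol $[\xx_1...\xx_t]_m$ is not even well defined; this is the same logical dependence the paper itself accepts by citing Loday.
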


 Before introducing a general definition of the \GK\ of a dialgebra~$\DDD$ over a field~$\kk$, we have to introduce some more notations.
Let~$\VVV, \VVV_1$ and~$\VVV_2$ be vector subspaces of~$\DDD$. We first define
$$\VVV_1\bl\VVV_2=\Span_k\{\xx\bl\yy\mid \xx\in \VVV_1, \yy\in \VVV_2\} \mbox{ and }\VVV_1\br\VVV_2=\Span_k\{\xx\br\yy\mid \xx\in \VVV_1, \yy\in \VVV_2\}.$$
Then we define~$\VVV^1=\VVV$ and~$\VVV^{n}=\sum_{1\leq i\leq n-1}(\VVV^i\bl\VVV^{n-i}+\VVV^i\br\VVV^{n-i})$ for every integer number~$\nn\geq 2$. Finally, we define
$$
\VVV^{\leq n}:=\VVV^{1}+\VVV^{2}+\pdots+\VVV^{n}.
$$
By Lemma~\ref{product}, we easily obtain
$$
\VVV^{n}=\Span_k\{\NBd{\xx_{1}...\xx_{\nn}}\pp \mid  1\leq p\leq \nn, \mbox{ where }\nn,\pp \in\mathbb{N}\mbox{ and }\xx_{1}\wdots\xx_n\in\VVV  \}
$$
and
$$
\VVV^{\leq n}=\Span_k\{\NBd{\xx_{1}...\xx_{\mm}}\pp \mid 1\leq p\leq \mm\leq\nn, \mbox{ where }\pp, \mm, \nn \in\mathbb{N} \mbox{ and }\xx_{1}\wdots\xx_\mm\in\VVV \}.
$$
Now we are ready to introduce the \GK\ of a dialgebra.
\begin{defi}\label{gkdefi}
Let~$\DDD$ be a dialgebra over a field~$\kk$, the \GK\ of a dialgebra~$\DDD$ is defined to be
$$ \gkd\DDD=\sup\limits_{\VVV}\overline{\lim\limits_{\nn \to \infty}}\log_{\nn}\mathsf{dim}(\VVV^{\leq\nn}),$$
where the supremum is taken over all finite dimensional subspaces~$\VVV$ of~$\DDD$.
\end{defi}
For instance, let~$\DDD$ be the free dialgebra generated by a letter~$\aa$.  Then it is easy to see that~$\{[\aa_1...\aa_n]_p\mid \aa_1=\pdots=\aa_n=\aa, 1\leq \pp\leq \nn$, and~$\pp, \nn \in~\mathbb{N}\}$ is a linear basis of~$\DDD$.   So by direct calculation, we obtain~$\gkd\DDD=2$.

By Definition~\ref{gkdefi}, if~$\DDD'$ is a subalgebra of~$\DDD$ or a homomorphic image of~$\DDD$, then we have~$\gkd{\DDD'}\leq \gkd{\DDD}$. And since we often consider generating sets of a dialgebra, we shall also use the following formula for calculating the \GK\ in the sequel.
\begin{lemm}\label{gk-generator}
Let~$\DDD$ be a dialgebra generated by a set~$\XXX$.
Then we have
\begin{equation}\label{formula}
\gkd\DDD=\sup\limits_{\XXX'\subseteq\XXX}
\overline{{\lim\limits_{n \to \infty}}}\log_{n} \mathsf{dim}((\kk\XXX')^{\leq n}),
\end{equation}
where the supremum is taken over all finite nonempty subsets~$\XXX'$ of~$\XXX$, and~$\kk\XXX'$ is the  vector subspace of~$\DDD$ spanned by~$\XXX'$.
\end{lemm}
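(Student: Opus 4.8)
The plan is to show that the two suprema agree: the one over all finite-dimensional subspaces~$\VVV$ in Definition~\ref{gkdefi}, and the one over finite nonempty subsets~$\XXX'\subseteq\XXX$ in~\eqref{formula}. One inequality is immediate. For every finite nonempty~$\XXX'\subseteq\XXX$ the span~$\kk\XXX'$ is itself a finite-dimensional subspace of~$\DDD$, so it occurs among the subspaces~$\VVV$ ranged over in Definition~\ref{gkdefi}; hence the right-hand side of~\eqref{formula} is~$\leq\gkd\DDD$. The content of the lemma is therefore the reverse inequality, for which I must bound the growth of an arbitrary finite-dimensional~$\VVV$ by the growth of some~$\kk\XXX'$.

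First I would reduce~$\VVV$ to generators. Fix a basis~$\vv_1\wdots\vv_k$ of~$\VVV$. Since~$\XXX$ generates~$\DDD$, the monomials with entries in~$\XXX$ span~$\DDD$, so each~$\vv_i$ is a finite linear combination of monomials~$[\xx_1...\xx_s]_q$ whose entries lie in~$\XXX$. Collecting all generators occurring in these finitely many monomials yields a finite subset~$\XXX'\subseteq\XXX$, and letting~$\mm$ be the largest length of any monomial that appears gives~$\VVV\subseteq(\kk\XXX')^{\leq\mm}$.

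The key step is the inclusion~$\VVV^{\leq n}\subseteq(\kk\XXX')^{\leq\mm n}$ for all~$n$. A spanning monomial of~$\VVV^{\leq n}$ has the form~$[\uu_1...\uu_\ell]_\pp$ with~$\ell\leq n$ and each~$\uu_i\in\VVV\subseteq(\kk\XXX')^{\leq\mm}$, so each~$\uu_i$ is a linear combination of monomials in~$\kk\XXX'$ of length at most~$\mm$. Expanding by bilinearity and repeatedly applying the product formulas of Lemma~\ref{product}, each resulting term is a single monomial with entries in~$\kk\XXX'$ whose length equals the sum of the lengths of its factors, hence is at most~$\mm\ell\leq\mm n$; the formulas also determine where the middle entry lands, but that position is irrelevant for membership in~$(\kk\XXX')^{\leq\mm n}$. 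Thus~$[\uu_1...\uu_\ell]_\pp\in(\kk\XXX')^{\leq\mm n}$, which yields~$\mathsf{dim}(\VVV^{\leq n})\leq\mathsf{dim}((\kk\XXX')^{\leq\mm n})$.

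Finally I would pass to the asymptotics. Writing~$\dd_N=\mathsf{dim}((\kk\XXX')^{\leq N})$, which is non-decreasing in~$N$, the identity~$\log_n\dd_{\mm n}=\log_{\mm n}\dd_{\mm n}\cdot(\log\mm+\log n)/\log n$ together with~$(\log\mm+\log n)/\log n\to1$ gives~$\overline{\lim}_{n\to\infty}\log_n\dd_{\mm n}=\overline{\lim}_{n\to\infty}\log_{\mm n}\dd_{\mm n}\leq\overline{\lim}_{N\to\infty}\log_N\dd_N$, the last inequality because~$\{\mm n\}_n$ is a subsequence of~$\{N\}_N$. Combined with the previous step this bounds~$\overline{\lim}_{n\to\infty}\log_n\mathsf{dim}(\VVV^{\leq n})$ by the right-hand side of~\eqref{formula}; taking the supremum over all~$\VVV$ gives~$\gkd\DDD\leq$ (RHS), and with the easy inequality the lemma follows. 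The only genuine obstacle is the dialgebra-specific bookkeeping in the key step: one must be sure that a product of monomials in a dialgebra simply concatenates their entries, with lengths adding, so that the extra middle-entry data can never inflate the length---this is exactly what Lemma~\ref{product} guarantees, and it is what makes the argument go through just as in the associative case.
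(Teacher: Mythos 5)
Your proposal is correct and follows essentially the same route as the paper's proof: the easy inequality because each $\kk\XXX'$ is a finite-dimensional subspace, the inclusion $\VVV\subseteq(\kk\XXX')^{\leq\mm}$ giving $\VVV^{\leq n}\subseteq(\kk\XXX')^{\leq\mm n}$, and the change-of-variable argument showing the factor $\mm$ washes out in the $\log_n$ asymptotics. The only difference is that you spell out the length-additivity of products via Lemma~\ref{product}, which the paper leaves implicit.
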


\begin{proof}
Let~$\VVV$ be a finite dimensional subspace of~$\DDD$. Then for some finite subset~$\XXX'$ of~$\XXX$ and for some integer number~$\mm$, we have~$\VVV \subseteq (\kk\XXX')^{\leq m}$ and~$\VVV^{\leq\nn}\subseteq ((\kk\XXX')^{\leq\mm})^{\leq\nn}\subseteq(\kk\XXX')^{\leq\mm\nn}$ for every integer number~$\nn\geq 1$.
Consequently, we deduce
\begin{align*}
\overline{{\lim\limits_{\nn \to \infty}}}\log_{\nn} \mathsf{dim}(\VVV^{\leq\nn})
&\leq \overline{{\lim\limits_{\nn \to \infty}}}\log_{\nn} \mathsf{dim}((\kk\XXX')^{\leq\mm\nn})
\overset{\tt=\mm\nn}{=} \overline{{\lim\limits_{\tt \to \infty}}}\frac{\log_{\tt}
\mathsf{dim}((\kk\XXX')^{\leq\tt})}{\log_{\tt}(\tt/\mm)}&
\\
&= \overline{{\lim\limits_{\tt \to \infty}}}\log_{\tt} \mathsf{dim}((\kk\XXX')^{\leq\tt})
\leq \sup\limits_{\XXX'\subseteq\XXX}\overline{{\lim\limits_{\tt \to \infty}}}\log_{\tt} \mathsf{dim}((\kk\XXX')^{\leq\tt})&
\end{align*}
and
$$
\gkd\DDD=\sup\limits_{\VVV}\overline{{\lim\limits_{\nn \to \infty}}}\log_{\nn} \mathsf{dim}(\VVV^{\leq\nn})
\leq\sup\limits_{\XXX'\subseteq\XXX}\overline{{\lim\limits_{\tt \to \infty}}}\log_{\tt} \mathsf{dim}((\kk\XXX')^{\leq\tt}).
$$
On the other hand, for every finite subset~$\XXX'$ of~$\XXX$, the subspace~$\kk\XXX'$ is a finite dimensional subspace of~$\DDD$. By Definition~\ref{gkdefi}, the lemma follows.
\end{proof}
As a consequence, when one applies Equation~\eqref{formula} to calculate the \GK\ of a dialgebra~$\DDD$, it is independent of the choices of the sets of generators of~$\DDD$.
In particular, when~$\DDD$ is finitely generated, Equation~\eqref{formula} becomes easier:
\begin{coro}\label{finite-formula}
Let~$\DDD$ be a dialgebra generated by a finite set~$\XXX$. Then we have $$\gkd\DDD=\overline{{\lim\limits_{\nn\to \infty}}}\log_{\nn}
\mathsf{dim}(  (\kk\XXX)^{\leq\nn}).$$
In particular, for every dialgebra~$\DDD$, we have~$\gkd\DDD=\sup_{\DDD'}\gkd{\DDD'}$,
where the supremum is taken over all finitely generated subalgebras~$\DDD'$ of~$\DDD$.
\end{coro}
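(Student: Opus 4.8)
The plan is to derive both statements directly from Lemma~\ref{gk-generator} together with the monotonicity of the \GK\ under passage to subalgebras. For the first assertion, I would apply Equation~\eqref{formula} with the finite generating set~$\XXX$ itself. Since~$\XXX$ is already finite, it is one of the admissible subsets~$\XXX'$ over which the supremum is taken, so the right-hand side of~\eqref{formula} is at least~$\overline{\lim_{\nn\to\infty}}\log_\nn\mathsf{dim}((\kk\XXX)^{\leq\nn})$. For the reverse inequality, I would observe that every finite nonempty subset~$\XXX'\subseteq\XXX$ satisfies~$\kk\XXX'\subseteq\kk\XXX$, whence~$(\kk\XXX')^{\leq\nn}\subseteq(\kk\XXX)^{\leq\nn}$ and~$\mathsf{dim}((\kk\XXX')^{\leq\nn})\leq\mathsf{dim}((\kk\XXX)^{\leq\nn})$ for every~$\nn$. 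Monotonicity of~$\log_\nn$ and of the upper limit then shows that each term in the supremum of~\eqref{formula} is dominated by the single term for~$\XXX$, giving the matching upper bound. Combining the two bounds yields the first equality.

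For the second assertion, one inequality is immediate from the remark following Definition~\ref{gkdefi}: every finitely generated subalgebra~$\DDD'$ of~$\DDD$ is in particular a subalgebra, so~$\gkd{\DDD'}\leq\gkd\DDD$, and hence~$\sup_{\DDD'}\gkd{\DDD'}\leq\gkd\DDD$. For the reverse inequality I would unwind Definition~\ref{gkdefi}, which expresses~$\gkd\DDD$ as a supremum over finite dimensional subspaces~$\VVV$ of~$\DDD$. Given such a~$\VVV$, I would fix a basis~$\XXX'$ of~$\VVV$ and let~$\DDD'$ be the (finitely generated) subalgebra it generates. The key point is that~$\VVV=\kk\XXX'$ for this basis, and that the spaces~$\VVV^{\leq\nn}$ are built using only the two operations~$\bl$ and~$\br$, so they coincide whether evaluated inside~$\DDD'$ or inside~$\DDD$. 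Applying the first assertion to~$\DDD'$ with the generating set~$\XXX'$ then gives~$\gkd{\DDD'}=\overline{\lim_{\nn\to\infty}}\log_\nn\mathsf{dim}(\VVV^{\leq\nn})$. Taking the supremum over all finite dimensional~$\VVV$ recovers the right-hand side of Definition~\ref{gkdefi}, so~$\gkd\DDD\leq\sup_{\DDD'}\gkd{\DDD'}$, completing the proof.

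The computations involved are entirely routine; the only point that requires a moment's care is the observation that~$\VVV^{\leq\nn}$ is insensitive to whether it is formed in the subalgebra~$\DDD'$ or in~$\DDD$, which is what licenses the identification of the subspace-indexed upper limits in Definition~\ref{gkdefi} with the \GK s of the associated finitely generated subalgebras. I therefore expect no serious obstacle: the corollary is essentially a repackaging of Lemma~\ref{gk-generator} and of the subalgebra monotonicity into two convenient closed forms.
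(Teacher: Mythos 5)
Your proposal is correct and follows exactly the route the paper intends: the paper states this corollary without proof as an immediate consequence of Lemma~\ref{gk-generator}, and your argument simply spells out that derivation (monotonicity of $\mathsf{dim}((\kk\XXX')^{\leq\nn})$ in $\XXX'$ for the first claim, and the identification of each finite dimensional $\VVV$ with a generating set of a finitely generated subalgebra, together with subalgebra monotonicity, for the second). The one point you flag --- that $\VVV^{\leq\nn}$ is the same whether formed in $\DDD'$ or in $\DDD$ --- is indeed the only detail worth checking, and it holds because $\DDD'$ is a subalgebra.
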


 \section{No dialgebra has \GK\ in the open interval~$(1, 2)$}\label{sec-main-result}

Our aim in this section is to prove our main results on \GK s of dialgebras. We shall prove the inequality~$\gkd\AD \leq \gkd\DDD\leq 2\gkd\AD$ and construct dialgebras such that~$\gkd\AD=\gkd\DDD$ or~$\gkd\DDD=2\gkd\AD$. Finally, we shall conclude the paper with Theorem~B.

We first recall the linear basis of a free dialgebra generated by an arbitrary well-ordered set~$\XXX$ constructed by Loday~\cite{loday}. Let~$(\XXX,<)$ be a fixed well-ordered set. Then we use~$\XXX^+$ for the free semigroup generated by~$\XXX$. For every~$\uu=\aa_1...\aa_n$ in~$\XXX^+$, where~$\aa_1\wdots\aa_n$ lie in~$\XXX$, we define the length~$\ell(\uu)$ of~$\uu$ to be~$n$. Finally, let~$\DDD(\XXX)$ be the free dialgebra generated by~$\XXX$. Then the following set
$$
[\XXX^+]_\omega:=\{[\aa_1...\aa_n]_\mm \mid \aa_1\wdots\aa_n\in \XXX,    \mm,\nn\in \mathbb{N}, 1\leq \mm \leq \nn \}
$$
forms a linear basis of~$\DDD(\XXX)$~\cite{loday}. For a nonempty sequence~$\uu=\aa_1...\aa_n$ over~$\XXX$, we shall also use the notation~$\NBd\uu\mm$ for~$\NBd{\aa_1...\aa_n}\mm$, and we call~$\NBd{\aa_1...\aa_n}\mm$ a \emph{disequence} over~$\XXX$. Obviously, two disequences~$\NBd{\aa_1...\aa_n}\mm$ and~$\NBd{\bb_1...\bb_p}\qq$ are the same only if~$\nn=\pp$, $\mm=\qq$ and~$\aa_i=\bb_i$ for every~$\ii\leq \nn$. Finally, for a subset~$\YYY$ of~$[\XXX^+]_{\omega}$, we define~$|\YYY|$ to be cardinality of the set~$\YYY$.

 Now we recall a well-ordering on~$[\XXX^+]_\omega$ introduced in~\cite{shijie}. Here we use a slightly different name for the ordering to make it easy to remember how we compare two monomials.
\begin{defi}\cite{shijie}\label{ordering}
We recall that the \emph{length-middle-lexicographic ordering} $<$ on $[\XXX^+]_\omega$ is defined as follows: For all~$[\uu]_{\pp}=[\aa_{1}...\aa_{\nn}]_{\pp}$ and~$[\vv]_{\qq}=[\bb_1...\bb_m]_{\qq}$  in~$[X^+]_\omega$ such that~$\aa_1\wdots\aa_n$ and~$\bb_1\wdots\bb_m$ lie in~$\XXX$, we define
$$[\uu]_{\pp}<[\vv]_{\qq} \ \mbox{if} \  (\ell(\uu),\pp,\aa_{1}\wdots \aa_{\nn})<(\ell(\vv),\qq,\bb_{1}\wdots \bb_{\mm}) \ \ \mbox{lexicographically}.$$
\end{defi}
For instance, for all~$\aa_1\wdots\aa_4$ in~$\XXX$ such that~$\aa_1< \pdots<\aa_4$, we have~$[\aa_{4}\aa_{3}\aa_{2}]_{1}<[\aa_{1}\aa_{3}\aa_{2}]_{2}$ and~$[\aa_{1}\aa_{2}\aa_{3}\aa_4]_{1}>[\aa_{1}\aa_{2}\aa_{4}]_{2}$.
Note that the above ordering is not compatible with the products in general. For instance,
 we have~$[\aa_1\aa_2]_2>[\aa_2\aa_1]_1$ and~$[\aa_1\aa_2]_2\bl \aa_3<[\aa_2\aa_1]_1\bl\aa_3$.

However, the length-middle-lexicographic ordering still has some good properties as follows:
\begin{lemm}\label{monomial-order}
Let~$\NBd{\uu_1}{\mm_1}$, $\NBd{\uu_2}{\mm_2}$ and~$\NBd{\uu_3}{\mm_3}$ be monomials in~$\Xomega$  with~$\NBd{\uu_1}{\mm_1}<\NBd{\uu_2}{\mm_2}$.  Then we have~$\NBd{\uu_3}{\mm_3}\bl \NBd{\uu_1}{\mm_1}
< \NBd{\uu_3}{\mm_3}\bl \NBd{\uu_2}{\mm_2}$ and~$\NBd{\uu_1}{\mm_1}\br \NBd{\uu_3}{\mm_3}
<  \NBd{\uu_2}{\mm_2}\br \NBd{\uu_3}{\mm_3}$;
if, in addition, $\mm_2=1$, then we also have
~$$\NBd{\uu_3}{\mm_3}\br \NBd{\uu_1}{\mm_1}
< \NBd{\uu_3}{\mm_3}\br \NBd{\uu_2}{\mm_2} \mbox{ and }\NBd{\uu_1}{\mm_1}\bl \NBd{\uu_3}{\mm_3}
<  \NBd{\uu_2}{\mm_2}\bl \NBd{\uu_3}{\mm_3}.$$
\end{lemm}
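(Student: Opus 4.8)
The plan is to compute each of the four products explicitly with the two formulas of Lemma~\ref{product} and then to read off each comparison directly from the definition of the length-middle-lexicographic ordering. Throughout, write $n_i=\ell(\uu_i)$ for $i\in\{1,2,3\}$, and recall that every middle index satisfies $\mm_i\ge 1$. Since $\NBd{\uu_1}{\mm_1}<\NBd{\uu_2}{\mm_2}$, exactly one of the following three situations occurs:
$$\text{(A) } n_1<n_2; \qquad \text{(B) } n_1=n_2,\ \mm_1<\mm_2; \qquad \text{(C) } n_1=n_2,\ \mm_1=\mm_2,\ \uu_1<\uu_2.$$
I would verify each required inequality by going through these three cases. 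The only word-combinatorial input needed is the elementary fact that prepending a common prefix to two words, or appending a common suffix to two words of the same length, preserves their lexicographic order.

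For the two unconditional inequalities, Lemma~\ref{product} gives
$$\NBd{\uu_3}{\mm_3}\bl\NBd{\uu_i}{\mm_i}=\NBd{\uu_3\uu_i}{n_3+\mm_i} \qquad\text{and}\qquad \NBd{\uu_i}{\mm_i}\br\NBd{\uu_3}{\mm_3}=\NBd{\uu_i\uu_3}{\mm_i}.$$
Comparing the instances $i=1$ and $i=2$: the length coordinate is $n_3+n_i$ (resp. $n_i+n_3$), the middle coordinate is $n_3+\mm_i$ (resp. $\mm_i$), and the word coordinate is $\uu_3\uu_i$ (resp. $\uu_i\uu_3$). In case (A) the lengths already decide the comparison; in case (B) the lengths agree and, since $\mm_1<\mm_2$, the middle coordinates satisfy $n_3+\mm_1<n_3+\mm_2$ and $\mm_1<\mm_2$ respectively, so they decide it; in case (C) lengths and middles agree and the comparison is settled by the words via the prefix/suffix fact. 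In each case the $i=1$ product is strictly smaller, which yields $\NBd{\uu_3}{\mm_3}\bl\NBd{\uu_1}{\mm_1}<\NBd{\uu_3}{\mm_3}\bl\NBd{\uu_2}{\mm_2}$ and $\NBd{\uu_1}{\mm_1}\br\NBd{\uu_3}{\mm_3}<\NBd{\uu_2}{\mm_2}\br\NBd{\uu_3}{\mm_3}$.

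For the two conditional inequalities, Lemma~\ref{product} gives instead
$$\NBd{\uu_3}{\mm_3}\br\NBd{\uu_i}{\mm_i}=\NBd{\uu_3\uu_i}{\mm_3} \qquad\text{and}\qquad \NBd{\uu_i}{\mm_i}\bl\NBd{\uu_3}{\mm_3}=\NBd{\uu_i\uu_3}{n_i+\mm_3}.$$
The hard part, and the precise reason the extra hypothesis is required, is that now the middle coordinate no longer records $\mm_i$ on its own: for the $\br$-product it is the constant $\mm_3$, and for the $\bl$-product it is $n_i+\mm_3$, which agrees for $i=1,2$ whenever $n_1=n_2$. Hence case (B) would be fatal, because there both the length and the middle coordinates of the two products coincide and the tie is broken by $\uu_1$ versus $\uu_2$, whose order is not controlled by the hypothesis. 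The assumption $\mm_2=1$ is exactly what excludes case (B): since $\mm_1\ge 1$, the inequality $\mm_1<\mm_2=1$ is impossible, so only cases (A) and (C) remain. In case (A) the length coordinate decides the comparison, and in case (C) the lengths and middles agree and the words decide it, again by the prefix/suffix fact; in both the $i=1$ product is strictly smaller, giving the remaining two inequalities. The routine verification of the product formulas above and of the prefix/suffix fact is all that is left.
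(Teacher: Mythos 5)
Your proof is correct and follows essentially the same route as the paper: the paper's own proof merely records the key observation for the conditional case (that $[\uu_1]_{\mm_1}<[\uu_2]_{1}$ forces $(\ell(\uu_1),\mm_1)\leq(\ell(\uu_2),1)$, and $\ell(\uu_1)=\ell(\uu_2)$ then gives $\mm_1=1$) and calls the rest straightforward, which is exactly your exclusion of case (B). Your case analysis via the product formulas fills in those "straightforward" details accurately, including the correct use of the same-length hypothesis when appending a common suffix.
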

\begin{proof}
  Note that~$\NBd{\uu_1}{\mm_1}<\NBd{\uu_2}{1}$ implies~$(\ell(u_1),\mm_1)\leq(\ell(\uu_2),1)$. Moreover, if~$\ell(u_1)=\ell(u_2)$, then we obtain~$\mm_1=1$. The remain of the proof is straightforward.
\end{proof}
  Let~$\DDD$ be a dialgebra generated by a well-ordered set~$\XXX$. Then~$[\XXX^+]_{\omega}$ is a linear generating set of~$\DDD$. Moreover, there is no harm to assume that~$\AAA_\DDD$ is also generated by~$\XXX$ and~~$[\XXX^+]_{\omega}$ is also a linear generating set of~$\AAA_\DDD$. To avoid possible confusions, whenever we write down a linear combination~$\ff$ of elements in~$[\XXX^+]_{\omega}$, we shall declare~$\ff$ in~$\DDD$ or~$\AAA_\DDD$. In this way, we are able to use the same notation for an element in~$\DDD$ or in~$\AAA_\DDD$.

We are now ready to introduce a general way of constructing a subset~$\BBB_\DDD$ of~$[\XXX^+]_{\omega}$ for a dialgebra~$\DDD$ with respect to a generating set~$\XXX$, and we shall see in Lemma~\ref{lemma-basis} that~$\BBB_\DDD$ turns out to be a linear basis of~$\DDD$. By convention, we shall assume that the empty set is a linear basis of the trivial vector space~$\{0\}$.
  \begin{defi}\label{defi-basis}
    Let~$\DDD$ be a dialgebra generated by a well-ordered set~$\XXX$. We call
    \begin{equation}\label{way-define-basis}
\parbox{132mm}{$\BBB_\DDD:=\{[\uu]_\pp \in \Xomega \mid $ for all integer number~$\nn$, for all~$[\uu_1]_{\pp_1}\wdots [\uu_n]_{\pp_n}$ in~$\Xomega$ such that~$[\uu_i]_{\pp_i}<[\uu]_\pp$ for every~$i\leq n$, the element~$[\uu]_\pp$ can not be written as a linear combination of the elements $[\uu_1]_{\pp_1}\wdots [\uu_n]_{\pp_n}$ in~$\DDD$ and~$[\uu]_\pp$ is not~$0$ in~$\DDD\}$ }
 \end{equation}
the \emph{shortest-middle-lexicographic basis} of~$\DDD$ with respect to~$\XXX$.
 \end{defi}

By Definition~\ref{defi-basis}, we know that~$\BBB_\DDD$ depends on the generating set~$\XXX$. However, to make the notation and formulas simple,
we shall still use the notation~$\BBB_\DDD$ but not~$\BBB_{\DDD,\XXX}$.  (No confusions arise in the sequel.)

\begin{lemm}\label{lemma-basis}
Let~$\DDD$ be a dialgebra generated by a well-ordered set~$\XXX$. Then the shortest-middle-lexicographic basis~$\BBB_\DDD$ of~$\DDD$ with respect to~$\XXX$ is a linear basis of~$\DDD$. In particular, let~$\AAA=\AAA_\DDD$ be the associative algebra associated to~$\DDD$, and let~$\BBB_\AAA$ be the shortest-middle-lexicographic basis of~$\AAA$ with respect to~$\XXX$. Then~$\BBB_\AAA$ is a linear basis of~$\AAA$.
\end{lemm}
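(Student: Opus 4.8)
The plan is to treat $\BBB_\DDD$ as a ``minimal monomial'' basis. By the defining condition~\eqref{way-define-basis}, each element of $\BBB_\DDD$ is a nonzero element of $\DDD$ that does \emph{not} lie in the $\kk$-span of the monomials of $\Xomega$ strictly below it in the length-middle-lexicographic ordering $<$. This single feature will yield both spanning and linear independence, \emph{provided} we first know that $<$ is a well-ordering of $\Xomega$; so the first step I would carry out is to verify that well-ordering property, which is the only point I expect to require genuine care.

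To check it, recall from Definition~\ref{ordering} that $[\uu]_\pp=[\aa_1\dots\aa_n]_\pp$ is compared through the tuple $(\ell(\uu),\pp,\aa_1\wdots\aa_n)$ read lexicographically. Given a nonempty $T\subseteq\Xomega$, I would select a least element in three stages: first minimize $\ell(\uu)$ over $T$ (a nonempty subset of $\mathbb{N}$), then among the minimizers minimize the middle index $\pp$ (a subset of the finite set $\{1\wdots n\}$), and finally minimize the word $\aa_1\dots\aa_n$ in the lexicographic order on $\XXX^{n}$. Since $(\XXX,<)$ is well-ordered, so is every finite lexicographic power $\XXX^{n}$, and the last minimum exists. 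The monomial so obtained is the least element of $T$, whence $<$ is a well-ordering; it is total because, as noted after the definition of a disequence, two monomials coincide exactly when all three coordinates coincide.

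Next I would prove spanning by transfinite (well-founded) induction along $<$: every $[\uu]_\pp\in\Xomega$ lies in $\Span_\kk\BBB_\DDD$. Fixing $[\uu]_\pp$ and assuming the claim for all strictly smaller monomials, there is nothing to prove if $[\uu]_\pp=0$ in $\DDD$ or $[\uu]_\pp\in\BBB_\DDD$; otherwise~\eqref{way-define-basis} fails for $[\uu]_\pp$, so $[\uu]_\pp$ is a $\kk$-linear combination of monomials $[\uu_i]_{\pp_i}\in\Xomega$ with $[\uu_i]_{\pp_i}<[\uu]_\pp$, each of which lies in $\Span_\kk\BBB_\DDD$ by the induction hypothesis; hence so does $[\uu]_\pp$. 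Because $\Xomega$ linearly generates $\DDD$, this gives $\Span_\kk\BBB_\DDD=\DDD$.

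Finally, linear independence follows from a largest-term argument, for which totality of $<$ and finiteness of linear combinations suffice. Suppose $\sum_{i=1}^{k}c_i[\uu_i]_{\pp_i}=0$ in $\DDD$ is a nontrivial relation among distinct elements of $\BBB_\DDD$ ordered as $[\uu_1]_{\pp_1}<\cdots<[\uu_k]_{\pp_k}$ with $c_k\neq0$. Solving for the top term writes $[\uu_k]_{\pp_k}$ as a $\kk$-linear combination of the strictly smaller monomials $[\uu_1]_{\pp_1}\wdots[\uu_{k-1}]_{\pp_{k-1}}$ of $\Xomega$, contradicting $[\uu_k]_{\pp_k}\in\BBB_\DDD$ (and when $k=1$ the relation forces $[\uu_1]_{\pp_1}=0$, again excluded by~\eqref{way-define-basis}). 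Thus $\BBB_\DDD$ is a linear basis of $\DDD$. The ``in particular'' claim is then immediate: $\AAA=\AD$ is itself a dialgebra, with $\bl=\br$, generated by $\XXX$, so applying the result just proved to $\AAA$ shows that $\BBB_\AAA$ is a linear basis of $\AAA$.
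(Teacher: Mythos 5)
Your proof is correct and follows essentially the same route as the paper: spanning by (well-founded) induction along the length-middle-lexicographic ordering, and linear independence by a top-term argument that contradicts the defining condition~\eqref{way-define-basis}. The only differences are cosmetic --- you verify explicitly that $<$ well-orders $\Xomega$ (the paper takes this from the cited reference) and you spell out the independence step that the paper declares clear.
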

\begin{proof}
 If~$\DDD=\{0\}$, then~$\BBB_\DDD$ is an empty set. Now we assume~$\DDD\neq\{0\}$. To show that~$\BBB_\DDD$ is a linear generating set of~$\DDD$, it suffices to show that every~$[u]_p$ in~$\Xomega$ lies in the subspace~$\Span_k(\BBB_\DDD)$ of~$\DDD$ spanned by~$\BBB_\DDD$.
  We use induction on~$[u]_p$ with respect to the length-middle-lexicographic ordering in Definition~\ref{ordering}. The minimal element of~$\XXX$ that is not~$0$ in~$\DDD$, say~$\aa$, lies in~$\BBB_\DDD$ by the definition of~$\BBB_\DDD$. Suppose~$[\uu]_\pp>\aa$. If~$[\uu]_\pp$ lies in~$\BBB_\DDD$ or~$[\uu]_\pp=0$ in~$\DDD$, then there is nothing to prove. Otherwise, say~$[\uu]_\pp=\sum_{1\leq i\leq n}\alpha_i[\uu_i]_{\pp_i}$ for some elements~$\alpha_1\wdots\alpha_n$ in~$\kk$ and for some monomials~$[\uu_1]_{\pp_1}\wdots[\uu_n]_{\pp_n}$ in~$\Xomega$ such that~$[\uu_i]_{\pp_i}<[\uu]_\pp$ for every~$i\leq n$, then by induction hypothesis, we have~$[\uu]_\pp\in \Span_k(\BBB_\DDD)$.
  Finally, by the construction of~$\BBB_\DDD$, it is clear that the set~$\BBB_\DDD$ is linear independent in~$\DDD$.
\end{proof}

Now we show that, every~$\xx\neq 0$ in~$\DDD$ such that~$\xx=0$ in~$\AAA_\DDD$ is a left zero divisor with respect~$\bl$ and a right zero divisor with respect to~$\br$ in~$\DDD$. To simplify the formulas, we define~$\ep$ to be an empty disequence, and for every~$\yy$ in~$\DDD$, we define~$\ep\bl\yy=\yy\br\ep=\yy$.
\begin{lemm}\label{zero-divisor}
 Let~$\DDD$ be a dialgebra and let~$\AAA_\DDD$ be the associative algebra associated to~$\DDD$.
Then for every~$\xx$ in~$\DDD$ such that~$\xx=0$ in~$\AAA_\DDD$, for every~$\yy$ in~$\DDD$, we have~$\xx\bl\yy=\yy\br\xx=0$ in~$\DDD$.
\end{lemm}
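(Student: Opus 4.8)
The plan is to identify the ideal $\Id\SS$ explicitly: I claim it is nothing more than the $\kk$-linear span of the generating set $\SS$ itself. Writing $\ss_{\uu,\vv}:=\uu\bl\vv-\uu\br\vv$ for the typical generator, the whole lemma follows once I show that (a) $\Span_\kk(\SS)$ is already a two-sided ideal of $\DDD$, so that $\Id\SS=\Span_\kk(\SS)$, and (b) each generator satisfies $\ss_{\uu,\vv}\bl\yy=0$ and $\yy\br\ss_{\uu,\vv}=0$ in $\DDD$ for every $\yy$.

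First I would verify (b) directly from the three defining identities of a dialgebra. For the first relation, $\ss_{\uu,\vv}\bl\zz=(\uu\bl\vv)\bl\zz-(\uu\br\vv)\bl\zz$, and the identity $(\xx\br\yy)\bl\zz=(\xx\bl\yy)\bl\zz$ shows the two terms coincide, so $\ss_{\uu,\vv}\bl\zz=0$. For the second, $\zz\br\ss_{\uu,\vv}=\zz\br(\uu\bl\vv)-\zz\br(\uu\br\vv)$, and the identity $\xx\br(\yy\bl\zz)=\xx\br(\yy\br\zz)$ makes the two terms coincide, so $\zz\br\ss_{\uu,\vv}=0$.

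Next I would establish (a) by computing the remaining two products of a generator with an arbitrary $\zz\in\DDD$ and checking that they land back in $\Span_\kk(\SS)$. Using $\xx\bl(\yy\br\zz)=(\xx\bl\yy)\br\zz$ together with associativity of~$\bl$, one finds $\zz\bl\ss_{\uu,\vv}=(\zz\bl\uu)\bl\vv-(\zz\bl\uu)\br\vv=\ss_{\zz\bl\uu,\,\vv}$; symmetrically, using the same identity together with associativity of~$\br$, one finds $\ss_{\uu,\vv}\br\zz=\uu\bl(\vv\br\zz)-\uu\br(\vv\br\zz)=\ss_{\uu,\,\vv\br\zz}$. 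Combined with the two vanishing products from (b), this shows that left and right multiplication by any element of~$\DDD$, under either operation, carries every generator $\ss_{\uu,\vv}$ into $\Span_\kk(\SS)$; by bilinearity the span is thus a two-sided ideal, and since it contains~$\SS$ and is contained in~$\Id\SS$, we conclude $\Id\SS=\Span_\kk(\SS)$.

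Finally, for the statement itself: if $\xx=0$ in $\AAA_\DDD$, that is $\xx\in\Id\SS=\Span_\kk(\SS)$, write $\xx=\sum_i\alpha_i\ss_{\uu_i,\vv_i}$ with $\alpha_i\in\kk$. Then by bilinearity and (b), $\xx\bl\yy=\sum_i\alpha_i(\ss_{\uu_i,\vv_i}\bl\yy)=0$ and $\yy\br\xx=\sum_i\alpha_i(\yy\br\ss_{\uu_i,\vv_i})=0$ for every $\yy\in\DDD$, as required. I expect the only real subtlety to be the conceptual step (a)---recognizing that the span of the generators is already closed under the dialgebra multiplications, which is what collapses $\Id\SS$ down to $\Span_\kk(\SS)$; once that observation is in place, the rest is immediate from the three defining identities, associativity, and bilinearity, with no induction on ideal-closure needed.
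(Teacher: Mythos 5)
Your proof is correct, but it takes a slightly different route from the paper's. The paper represents an arbitrary element of $\Id\SS$ in the sandwich normal form $\sum_j\alpha_j(\zz_j\bl(\xx_j\bl\yy_j-\xx_j\br\yy_j))\br\zz_j'$ (with $\zz_j,\zz_j'$ possibly empty) and then checks directly, using Lemma~\ref{product}, that applying $(\cdot)\bl\yy$ or $\yy\br(\cdot)$ to such a sandwich kills it, because these products forget the position of the middle entry. You instead prove the sharper structural fact that $\Id\SS=\Span_\kk(\SS)$: the four one-step products of a generator $\ss_{\uu,\vv}=\uu\bl\vv-\uu\br\vv$ with an arbitrary element either vanish (namely $\ss_{\uu,\vv}\bl\zz=0$ by $(\xx\br\yy)\bl\zz=(\xx\bl\yy)\bl\zz$, and $\zz\br\ss_{\uu,\vv}=0$ by $\xx\br(\yy\bl\zz)=\xx\br(\yy\br\zz)$) or land back in $\SS$ (namely $\zz\bl\ss_{\uu,\vv}=\ss_{\zz\bl\uu,\vv}$ and $\ss_{\uu,\vv}\br\zz=\ss_{\uu,\vv\br\zz}$, both via $\xx\bl(\yy\br\zz)=(\xx\bl\yy)\br\zz$ and associativity); all four computations check out against the dialgebra axioms. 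The underlying mechanism is the same in both arguments, but your version buys a cleaner conclusion (an explicit description of the kernel of $\DDD\to\AD$ as a vector space) and dispenses with the need to justify the sandwich normal form, at the cost of one extra verification that the span is closed under multiplication.
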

\begin{proof}
  If~$\xx=0$ in~$\AAA_\DDD$, then we have~$
\xx=\sum\alpha_{\jj}(\zz_j\bl(\xx_j\bl\yy_j -\xx_j\br\yy_j))\br\zz_j' \mbox{ in } \DDD$ by Lemma~\ref{product},
where each~$\xx_j$ and each~$\yy_j$ lie in~$\DDD$, each~$\zz_j$ and each~$\zz_j'$ may be elements of~$\DDD$ or be the empty disequence~$\ep$. So for every~$\yy$ in~$\DDD$, by Lemma~\ref{product} again, we obtain
\begin{align*}
  &((\zz_j\bl(\xx_j\bl\yy_j -\xx_j\br\yy_j))\br\zz_j')\bl\yy&
  \\
  =&((\zz_j\bl(\xx_j\bl\yy_j))\br\zz_j')\bl\yy -((\zz_j\bl(\xx_j\br\yy_j))\br\zz_j')\bl\yy&
  \\
  =&((\zz_j\bl(\xx_j\bl\yy_j))\br\zz_j')\bl\yy -((\zz_j\bl(\xx_j\bl\yy_j))\br\zz_j')\bl\yy=0.&
\end{align*}
Similar to the above reasoning, we obtain~$\yy\br\xx=0$.
\end{proof}

 Before going further, we shall observe some more properties on the shortest-middle-lexicographic basis of a dialgebra. In Lemma~\ref{subword-in-a}, we shall use the shortest-middle-lexicographic basis of an associative subalgebra~$\AAA'$ of~$\AAA_\DDD$ generated by~$\XXX'$, where~$\XXX'$ is a nonempty subset of~$\XXX$. By changing every occurrence of~$\DDD$ into~$\AAA'$ and every occurrence of~$\XXX$ into~$\XXX'$ in~\eqref{way-define-basis}, we shall obtain the corresponding set~$\BBB_{\AAA'}$.

\begin{lemm}\label{subword-in-a}Let~$\DDD$ be a dialgebra generated by a well-ordered set~$\XXX$, and let~$\AAA=\AAA_\DDD$ be the associative algebra associated to~$\DDD$. Let~$\DDD'$ be a subalgebra of~$\DDD$ generated by a subset~$\XXX'$ of~$\XXX$, and let~$\AAA'$ be a subalgebra of~$\AAA$ generated by~$\XXX'$.
Then for every monomial~$[\aa_{1}...\aa_{t}]_{\pp}$ in the shortest-middle-lexicographic basis~$\BBB_{\AAA'}$ of~$\AAA'$ with respect to~$\XXX'$, where the letters~$\aa_1\wdots\aa_t$ lie in~$\XXX'$, we have~$\pp=1$. Moreover, if~$[\aa_{1}...\aa_{t}]_{\pp}$ lies in the shortest-middle-lexicographic basis~$\BBB_{\DDD'}$ of~$\DDD'$ with respect to~$\XXX'$ and~$\pp>1$, then~$[\aa_{1}...\aa_{\pp-1}]_{1}$ lies in~$\BBB_{\AAA'}$; if~$[\aa_{1}...\aa_{t}]_{\pp}$ lies in~$\BBB_{\DDD'}$ and~$\pp<\tt$, then~$[\aa_{\pp+1}...\aa_{t}]_{1}$ lies in~$\BBB_{\AAA'}$.
\end{lemm}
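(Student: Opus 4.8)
The plan is to establish the three assertions in turn, powered throughout by the product formulas of Lemma~\ref{product}, the zero-divisor property of Lemma~\ref{zero-divisor}, and the one-sided order-compatibility of Lemma~\ref{monomial-order}. The first assertion is the easiest: in~$\AAA'$ the two operations coincide, so~$\NBd{\aa_1...\aa_t}{\pp}=\NBd{\aa_1...\aa_t}{1}$ for every admissible~$\pp$, while in the length-middle-lexicographic ordering of Definition~\ref{ordering} one has~$\NBd{\aa_1...\aa_t}{1}\leq\NBd{\aa_1...\aa_t}{\pp}$, with equality exactly when~$\pp=1$. Hence any monomial with~$\pp>1$ equals a strictly smaller monomial and is excluded from~$\BBB_{\AAA'}$ by Definition~\ref{defi-basis}; so every element of~$\BBB_{\AAA'}$ has middle index~$1$.

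For the second assertion I would argue by contradiction, assuming~$\NBd{\aa_1...\aa_t}{\pp}\in\BBB_{\DDD'}$ with~$\pp>1$ but~$\NBd{\aa_1...\aa_{\pp-1}}{1}\notin\BBB_{\AAA'}$, and manufacturing a representation of~$\NBd{\aa_1...\aa_t}{\pp}$ by strictly smaller monomials in~$\DDD'$. If~$\NBd{\aa_1...\aa_{\pp-1}}{1}=0$ in~$\AAA'$, then Lemma~\ref{zero-divisor} gives~$\NBd{\aa_1...\aa_{\pp-1}}{1}\bl\NBd{\aa_{\pp}...\aa_t}{1}=0$ in~$\DDD'$, and the first identity of Lemma~\ref{product} shows the left-hand side is exactly~$\NBd{\aa_1...\aa_t}{\pp}$, so this monomial vanishes in~$\DDD'$, a contradiction. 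Otherwise, using the first assertion to take all middle indices equal to~$1$, I can write~$\NBd{\aa_1...\aa_{\pp-1}}{1}=\sum_j\beta_j\NBd{\vv_j}{1}$ in~$\AAA'$ with~$\NBd{\vv_j}{1}<\NBd{\aa_1...\aa_{\pp-1}}{1}$; this identity also holds in~$\AAA$, so Lemma~\ref{zero-divisor} applies to the difference~$\NBd{\aa_1...\aa_{\pp-1}}{1}-\sum_j\beta_j\NBd{\vv_j}{1}$, and multiplying on the right by~$\NBd{\aa_{\pp}...\aa_t}{1}$ through~$\bl$ yields, via the first identity of Lemma~\ref{product}, the relation~$\NBd{\aa_1...\aa_t}{\pp}=\sum_j\beta_j\NBd{\vv_j\aa_{\pp}...\aa_t}{\ell(\vv_j)+1}$ in~$\DDD'$. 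Since the larger factor~$\NBd{\aa_1...\aa_{\pp-1}}{1}$ has middle index~$1$, the additional clause of Lemma~\ref{monomial-order} applies and forces each right-hand term to lie strictly below~$\NBd{\aa_1...\aa_t}{\pp}$, contradicting~$\NBd{\aa_1...\aa_t}{\pp}\in\BBB_{\DDD'}$.

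The third assertion is proved symmetrically, with~$\br$ and the right block replacing~$\bl$ and the left block. Assuming~$\NBd{\aa_1...\aa_t}{\pp}\in\BBB_{\DDD'}$ with~$\pp<\tt$ but~$\NBd{\aa_{\pp+1}...\aa_t}{1}\notin\BBB_{\AAA'}$, the vanishing case collapses~$\NBd{\aa_1...\aa_t}{\pp}$ to~$0$ in~$\DDD'$ through Lemma~\ref{zero-divisor} and the second identity of Lemma~\ref{product}; in the reducible case I write~$\NBd{\aa_{\pp+1}...\aa_t}{1}=\sum_j\beta_j\NBd{\ww_j}{1}$ with~$\NBd{\ww_j}{1}<\NBd{\aa_{\pp+1}...\aa_t}{1}$, apply Lemma~\ref{zero-divisor}, and multiply on the left by~$\NBd{\aa_1...\aa_{\pp}}{\pp}$ through~$\br$. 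The second identity of Lemma~\ref{product} keeps the middle entry at position~$\pp$ and produces~$\NBd{\aa_1...\aa_t}{\pp}=\sum_j\beta_j\NBd{\aa_1...\aa_{\pp}\ww_j}{\pp}$ in~$\DDD'$; as~$\NBd{\aa_{\pp+1}...\aa_t}{1}$ has middle index~$1$, Lemma~\ref{monomial-order} again forces every right-hand term to be strictly smaller than~$\NBd{\aa_1...\aa_t}{\pp}$, the desired contradiction.

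The main obstacle I anticipate is keeping the middle index under exact control. One must verify that the chosen identity of Lemma~\ref{product} deposits the middle entry precisely at position~$\pp$, so that the lifted relation genuinely expresses~$\NBd{\aa_1...\aa_t}{\pp}$ rather than a neighbouring monomial, and one must ensure that the factor being bounded has middle index~$1$, which is exactly the hypothesis under which the one-sided clauses of Lemma~\ref{monomial-order} yield strict inequalities. This is precisely why the first assertion, confining~$\BBB_{\AAA'}$ to middle index~$1$, must be secured before the other two. The remaining delicate point is the transport of an identity from~$\AAA'$ back to~$\DDD'$: recognizing that such an identity also holds in~$\AAA$, that Lemma~\ref{zero-divisor} then converts it into a product relation in~$\DDD$, and that multiplying by the complementary block reassembles exactly~$\NBd{\aa_1...\aa_t}{\pp}$ is the crux on which the whole argument turns.
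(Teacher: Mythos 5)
Your proposal is correct and follows essentially the same route as the paper: the same case split on whether $[\aa_1\ldots\aa_{p-1}]_1$ (resp. $[\aa_{p+1}\ldots\aa_t]_1$) vanishes or reduces in $\AAA'$, the same use of Lemma~\ref{zero-divisor} to transport the relation back to $\DDD'$ via Lemma~\ref{product}, and the same appeal to the one-sided clause of Lemma~\ref{monomial-order}, which applies because the factor being replaced has middle index $1$. The only cosmetic differences are that you normalize the smaller monomials to middle index $1$ (the paper keeps general indices, which suffices since Lemma~\ref{monomial-order} only constrains the larger factor) and that you write out the third assertion explicitly where the paper treats it as symmetric.
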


\begin{proof}
By the definition of~$\AAA$,  for every integer number~$\pp$ such that~$1\leq \pp\leq \tt$, we have~$[\aa_{1}...\aa_{t}]_{\pp}=[\aa_{1}...\aa_{t}]_{1}$ in~$\AAA$ and thus in~$\AAA'$. So~$[\aa_{1}...\aa_{t}]_{\pp} \in\BBB_{\AAA'}$ forces~$\pp=1$.
For the second claim, we may assume that~$\pp>1$ without loss of generality. Suppose that~$[\aa_{1}...\aa_{\pp-1}]_{1}\notin\BBB_{\AAA'}$.

 If~$[\aa_{1}...\aa_{\pp-1}]_{1}=0$ in~$\AAA'$ and thus in~$\AAA$, then by Lemma~\ref{zero-divisor}, we have
$$[\aa_{1}\aa_{2}\cdots\aa_{\tt}]_{\pp}= [\aa_{1}...\aa_{\pp-1}]_{1} \bl [\aa_\pp...\aa_\tt]_1=0 \mbox{ in } \DDD \mbox{ and hence in } \DDD',$$
which contradicts with the fact that~$[\aa_1...\aa_t]_\pp$ lies in~$\BBB_{\DDD'}$.

If~$[\aa_{1}...\aa_{\pp-1}]_{1}\neq 0$ in~$\AAA'$, then we have~$[\aa_{1}...\aa_{\pp-1}]_{1}=\sum_{1\leq i\leq n}\alpha_{\ii}[\uu_i]_{\pp_i}$ in $\AAA'$ and thus in~$\AAA$ for some monomials~$[\uu_1]_{\pp_1}\wdots[\uu_n]_{\pp_n}$ in~$[{\XXX'}^+]_\omega$ satisfying~$[\uu_i]_{\pp_i}<[\aa_{1}...\aa_{\pp-1}]_{1}$ for every integer~$i\leq n$ and for some elements~$\alpha_1\wdots\alpha_n$ in the field~$\kk$. In particular,  by Lemma~\ref{zero-divisor} again, we obtain
$$
[\aa_{1}...\aa_{\tt}]_{p}
-\sum_{1\leq i\leq n}\alpha_{\ii}[\uu_{\ii}]_{\pp_i}\bl[\aa_{\pp}\aa_{\pp+1}...\aa_{\tt}]_1
=([\aa_{1}...\aa_{\pp-1}]_{1}-\sum_{1\leq i\leq n}\alpha_{\ii}[\uu_{\ii}]_{\pp_i})\bl[\aa_{\pp}\aa_{\pp+1}...\aa_{\tt}]_1=0
$$
 in~$\DDD$, and thus in~$\DDD'$. Finally, by Lemma~\ref{monomial-order}, we obtain~$[\uu_{\ii}]_{\pp_i}\bl[\aa_{\pp}\aa_{\pp+1}...\aa_{\tt}]_{1}<[\aa_1...\aa_t]_\pp$,
which contradicts with the construction of~$\BBB_{\DDD'}$.
\end{proof}

Note that the length-middle-lexicographic ordering on~$[\XXX^+]_\omega$ plays an important role in Lemma~\ref{subword-in-a}.  Using another way of constructing linear bases for~$\DDD$ and~$\AD$ simultaneously may not have the claimed properties in the lemma.

\begin{rema}
  With the notations of Lemma~\ref{subword-in-a}, if~$\XXX\neq\XXX'$, then~$\AAA'$ is not necessary the associative algebra associated to~$\DDD'$ in general. For instance, let~$\XXX=\{\aa,\bb\}$, and let~$\DDD(\aa,\bb)$ be the free dialgebra generated by~$\XXX$. Then we define~$\DDD$ to be the quotient of~$\DDD(\aa,\bb)$ by the ideal generated by the set~$\{\bb=\aa\bl\aa-\aa\br\aa\}$. It is clear that~$\DDD$ is isomorphic to the free dialgebra generated by~$\{\aa\}$ and~$\AAA$ is isomorphic to the free associative algebra generated by~$\{\aa\}$. Finally, for~$\XXX'=\{\bb\}$, we deduce that, ~$\DDD'$ and the associative algebra associated to~$\DDD'$ are the linear space spanned by~$\bb$ while~$\AAA'$ is the 0 space.
\end{rema}

Let~$\DDD$ be a dialgebra generated by a finite well-ordered set~$\XXX$ and let~$\BBB_\DDD$ be the shortest-middle-lexicographic basis of~$\DDD$ with respect to~$\XXX$. For every positive integer~$n$, we define
$$
\BBB_\DDD^{n}=\{[\uu]_\pp \in \BBB_\DDD \mid \ell(u)=n\}, \ \BBB_\DDD^{\leq n}=\{[\uu]_\pp \in \BBB_\DDD \mid \ell(u)\leq n\}.
$$
 Then we have~$\mathsf{dim}((\kk\XXX)^{\leq \nn})=|\BBB_\DDD^{\leq n}|$ for every~$\nn\geq 1$, where~$|\BBB_\DDD^{\leq n}|$ is the cardinality of the set~$\BBB_\DDD^{\leq n}$. In particular, by Corollary~\ref{finite-formula}, when~$\DDD$ is generated by a finite well-ordered set~$\XXX$, we deduce the following formula for calculating the \GK\ of~$\DDD$:
\begin{equation}\label{gk-by-basis}
   \gkd\DDD=\overline{{\lim\limits_{\nn\to \infty}}}\log_{\nn}|\BBB_\DDD^{\leq\nn}|.
\end{equation}

Now we are ready to prove Theorem~A of the introduction.
\begin{theorem}\label{inequality}
 Let $\DDD$ be a dialgebra and let~$\AD$ be the associative algebra associated to~$\DDD$.    Then we have
\begin{equation}\label{boundary}
\gkd\AD\leq\gkd\DDD\leq2\gkd\AD.
\end{equation}
In particular, the inequality $\gkd\DDD<\infty$ holds if and only if~$\gkd\AD<\infty$ holds.
\end{theorem}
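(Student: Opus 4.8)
The plan is to prove the two inequalities in \eqref{boundary} by very different means. The left-hand inequality $\gkd\AD\le\gkd\DDD$ is immediate: viewing the associative algebra $\AD$ as a dialgebra with $\bl=\br=\cdot$ makes each power $\VVV^{n}$ collapse to the ordinary $n$-th power, so the dialgebra GK-dimension of $\AD$ coincides with its associative one; and since $\AD=\DDD/\Id\SS$ is a homomorphic image of $\DDD$, the monotonicity of the GK-dimension under homomorphic images noted after Definition~\ref{gkdefi} finishes this direction.

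For the right-hand inequality I would first reduce to the finitely generated case. By Corollary~\ref{finite-formula}, $\gkd\DDD=\sup_{\DDD'}\gkd{\DDD'}$ over finitely generated subalgebras $\DDD'\subseteq\DDD$, so fix one such $\DDD'$, generated by a finite well-ordered set $\XXX'$, and let $\AAA'$ be the subalgebra of $\AD$ generated by $\XXX'$. The key structural input is Lemma~\ref{subword-in-a}, which relates $\BBB_{\DDD'}$ to $\BBB_{\AAA'}$: to each $[\aa_1\cdots\aa_\tt]_\pp\in\BBB_{\DDD'}^{\leq n}$ I associate the triple formed by its middle letter $\aa_\pp\in\XXX'$, its left part $L=[\aa_1\cdots\aa_{\pp-1}]_1$ (put $L=\ep$ if $\pp=1$), and its right part $R=[\aa_{\pp+1}\cdots\aa_\tt]_1$ (put $R=\ep$ if $\pp=\tt$). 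This map is injective, since the monomial and the value $\pp=\ell(L)+1$ can be read off the triple, and by Lemma~\ref{subword-in-a} every nonempty $L$ and $R$ lies in $\BBB_{\AAA'}$ and has length at most $\tt-1\le n-1$. Counting triples then gives
\begin{equation*}
|\BBB_{\DDD'}^{\leq n}|\ \le\ |\XXX'|\,\bigl(|\BBB_{\AAA'}^{\leq n-1}|+1\bigr)^{2}.
\end{equation*}

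Applying $\log_n$, passing to $\limsup_{n\to\infty}$, and using $\log_n|\XXX'|\to0$ — the shift $n-1\mapsto n$ and the additive constant being harmless for the $\limsup$ — formula~\eqref{gk-by-basis} (for both $\DDD'$ and $\AAA'$) yields $\gkd{\DDD'}\le 2\gkd{\AAA'}$. Since $\AAA'$ is a genuine subalgebra of $\AD$, monotonicity gives $\gkd{\AAA'}\le\gkd\AD$, so $\gkd{\DDD'}\le 2\gkd\AD$; taking the supremum over $\DDD'$ proves $\gkd\DDD\le 2\gkd\AD$, and the finiteness equivalence is then immediate from \eqref{boundary}. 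I expect the main obstacle to be conceptual rather than computational, namely the choice of comparison algebra: it is tempting to bound $\gkd{\DDD'}$ by $2\gkd{\AAA_{\DDD'}}$, but the Remark after Lemma~\ref{subword-in-a} warns that $\AAA_{\DDD'}$ need not be a subalgebra of $\AD$, so its GK-dimension cannot be controlled by $\gkd\AD$. Routing the estimate through the honest subalgebra $\AAA'$, whose basis is compatible with $\BBB_{\DDD'}$ by Lemma~\ref{subword-in-a}, is exactly what makes the argument close.
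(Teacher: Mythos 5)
Your proposal is correct and follows essentially the same route as the paper: the lower bound via the homomorphic image $\AD=\DDD/\Id\SS$, and the upper bound by reducing to finitely generated subalgebras via Corollary~\ref{finite-formula} and then counting $\BBB_{\DDD'}$ against $\BBB_{\AAA'}$ using Lemma~\ref{subword-in-a}. Your packaging of the count as an injection into triples $(L,\aa_\pp,R)$ is just a cleaner phrasing of the paper's length-by-length summation, and both arrive at the same bound $|\XXX'|\bigl(|\BBB_{\AAA'}^{\leq n}|+1\bigr)^{2}$.
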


\begin{proof}
We use the notations of Lemma~\ref{subword-in-a}. By Definition~\ref{gkdefi},  it is easy to see that, for every homomorphic image~$\DDD_1$ of~$\DDD$,  we have~$\gkd{\DDD_1}\leq \gkd\DDD$. In particular, we obtain~$\gkd{\AAA_\DDD}\leq\gkd\DDD$.

Now we turn to the inequality~$\gkd\DDD\leq2\gkd\AD$. For finitely generated algebras~$\DDD'$ and~$\AAA'$ as in Lemma~\ref{subword-in-a}, we define~$|\BBB_{\AAA'}^0|=|\BBB_{\DDD'}^0|=1$ for convenience. Then by Lemma~\ref{subword-in-a}, for every integer number~$\tt\geq 1$, we obtain
~$
|\BBB_{\DDD'}^{\tt}|\leq\sum\limits_{1\leq\pp\leq\tt}|\BBB_{\AAA'}^{\pp-1}||\XXX'||\BBB_{\AAA'}^{\tt-\pp}|
$
and
\begin{align*}
&|\BBB_{\DDD'}^{\leq\nn}|=\sum\limits_{1\leq\tt\leq\nn}|\BBB_{\DDD'}^{\tt}|
\leq  \sum_{1\leq\tt\leq\nn}\sum_{1\leq\pp\leq\tt}|
\BBB_{\AAA'}^{\pp-1}||\XXX'||\BBB_{\AAA'}^{\tt-\pp}|
=|\XXX'|\sum_{1\leq\tt\leq\nn}\sum_{\substack{ \ii\geq 0;\jj\geq 0;\\ \ii+\jj=\tt-1}}|\BBB_{\AAA'}^{\ii}||\BBB_{\AAA'}^{\jj}|
&\\
=&|\XXX'|\sum_{0\leq\ii\leq\nn-1}(|\BBB_{\AAA'}^{i}|\sum_{0\leq j\leq n-1-i}
|\BBB_{\AAA'}^{j}|)
\leq |\XXX'|\sum_{0\leq\ii\leq\nn-1}(|\BBB_{\AAA'}^{i}|(|\BBB_{\AAA'}^{\leq n-1}|+|\BBB_{\AAA'}^{0}|) )
&\\
\leq& |\XXX'|(|\BBB_{\AAA'}^{\leq n-1}|+|\BBB_{\AAA'}^{0}|)\sum_{0\leq\ii\leq\nn-1}|\BBB_{\AAA'}^{i}|
\leq |\XXX'|(|\BBB_{\AAA'}^{\leq n-1}|+|\BBB_{\AAA'}^{0}|)^2\leq |\XXX'|(|\BBB_{\AAA'}^{\leq n}|+1)^{2}.&
\end{align*}
Consequently, we obtain
$$
\gkd{\DDD'}=\overline{{\lim\limits_{\nn\to \infty}}}\log_{\nn}|\BBB_{\DDD'}^{\leq\nn}|
\leq\overline{{\lim\limits_{\nn\to \infty}}}\log_{\nn}(|\XXX'|(|\BBB_{\AAA'}^{\leq n}|+1)^{2})
=2\gkd{\AAA'}\leq 2\gkd\AD.
$$
By Corollary~\ref{finite-formula}, we deduce~$\gkd\DDD=\sup_{\DDD'}\gkd{\DDD'}\leq2\gkd\AD$,
where the supremum is taken over all finitely generated subalgebras~$\DDD'$ of~$\DDD$.
\end{proof}

\begin{coro}
  Let~$\DDD$ be a dialgebra. Then~$\gkd\DDD<1$ implies that~$\gkd\DDD=0$.
\end{coro}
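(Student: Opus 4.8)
The plan is to read off the corollary directly from the two-sided estimate of Theorem~\ref{inequality}, combined with the classical fact that an associative algebra can have no \GK\ in the open interval~$(0,1)$. First I would apply the lower bound $\gkd\AD\leq\gkd\DDD$ of Theorem~\ref{inequality}: under the hypothesis $\gkd\DDD<1$ this immediately yields $\gkd\AD<1$, so the whole question reduces to understanding which values below~$1$ the \GK\ of an associative algebra can take.

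Next I would invoke the well-known gap below~$1$ for associative algebras, namely that the \GK\ of any associative algebra lies in $\{0\}\cup[1,\infty]$ (see~\cite{book}). The justification I have in mind is the standard one and can be recalled quickly. By Corollary~\ref{finite-formula} it suffices to treat a finitely generated subalgebra~$\AAA'$ with a finite generating subspace~$\VVV$; the integers $\mathsf{dim}(\VVV^{\leq n})$ then form a nondecreasing sequence. If this sequence is eventually constant, say $\VVV^{\leq n}=\VVV^{\leq n+1}$ for some~$n$, then a routine induction gives $\VVV^{\leq m}=\VVV^{\leq n}$ for all $m\geq n$, so $\AAA'$ is finite dimensional and $\gkd{\AAA'}=0$; otherwise the sequence is strictly increasing, whence $\mathsf{dim}(\VVV^{\leq n})\geq n+1$ and $\gkd{\AAA'}\geq 1$. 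Passing to the supremum over finitely generated subalgebras shows $\gkd\AD\in\{0\}\cup[1,\infty]$, so $\gkd\AD<1$ forces $\gkd\AD=0$.

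Finally I would feed this back into the upper bound $\gkd\DDD\leq 2\gkd\AD$ of Theorem~\ref{inequality}, which now reads $\gkd\DDD\leq 2\cdot 0=0$; since the \GK\ is always nonnegative, this gives $\gkd\DDD=0$, as required.

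I do not expect a genuine obstacle, since both ingredients are already in hand: the dialgebra-to-algebra comparison is exactly Theorem~\ref{inequality}, and the vanishing gap between~$0$ and~$1$ for associative algebras is classical. The only point requiring care is to appeal to the $(0,1)$-gap rather than to Bergman's $(1,2)$-gap quoted in the introduction, since it is precisely the absence of associative \GK\ strictly between~$0$ and~$1$ that drives the reduction above, and this is the more elementary of the two gap phenomena.
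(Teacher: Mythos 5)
Your proposal is correct and follows essentially the same route as the paper: the lower bound of Theorem~\ref{inequality} gives $\gkd\AD\leq\gkd\DDD<1$, the classical $(0,1)$-gap for associative algebras forces $\gkd\AD=0$, and the upper bound $\gkd\DDD\leq 2\gkd\AD$ finishes. The paper simply asserts the step $\gkd\AD=0$ without recalling the standard dichotomy argument that you spell out; otherwise the two proofs coincide.
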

\begin{proof}
  Let~$\AAA_\DDD$ be the associative algebra associated to~$\DDD$. Then~$\gkd\DDD<1$ implies that~$\gkd{\AAA_\DDD}=0$. So we obtain~$0=\gkd{\AAA_\DDD}\leq \gkd\DDD\leq 2\gkd{\AAA_\DDD}=0$.
\end{proof}
Now we construct examples to show that the boundaries in~Theorem~\ref{inequality} are the best that one can expect.
We shall describe a sufficient condition ensuring~$\gkd\DDD=\gkd{\AAA_\DDD}$ in Lemma~\ref{special-basis}.
\begin{exam}
Let~$\DDD$ be the free dialgebra generated by a letter~$\aa$, and let~$\AAA_\DDD$ be the associative algebra associated to~$\DDD$. Then we have~$\gkd\DDD=2\gkd{\AAA_\DDD}=2$.
\end{exam}
\begin{proof}
 Note that~$\BBB_\DDD=\{[\aa_1...\aa_n]_p\mid \aa_1=\pdots=\aa_n=\aa, 1\leq \pp\leq \nn$, and~$\pp, \nn \in~\mathbb{N}\}$, and~$\AAA_\DDD$ is the free commutative algebra generated by~$\aa$. We have~$\gkd\DDD=2\gkd{\AAA_\DDD}=2$ immediately by Equation~\eqref{gk-by-basis}.
\end{proof}

\begin{lemm}\label{special-basis}
Let~$\DDD$ be a dialgebra generated by a finite well-ordered set~$\XXX$, and let~$\AAA_\DDD$ be the associative algebra associated to~$\DDD$. If there exists a positive integer~$\mm$ such that~$\DDD$ has a linear generating set~$\BBB=\{[\aa_{1}...\aa_{\nn}]_{\pp}\mid\aa_{1}\wdots\aa_\nn\in\XXX, 1\leq \pp\leq\mm$ or~$0\leq \nn-\pp\leq\mm-1$, where~$\pp,\nn$ lies in~$\mathbb{N}$ and~$\pp\leq \nn\}$. Then we obtain~$\gkd\DDD=\gkd{\AAA_\DDD}$.
\end{lemm}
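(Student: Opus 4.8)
The plan is to prove the two inequalities $\gkd\AD\leq\gkd\DDD$ and $\gkd\DDD\leq\gkd\AD$ separately. The first is already Theorem~\ref{inequality}, so the whole task reduces to $\gkd\DDD\leq\gkd\AD$. Since $\DDD$ is generated by the finite set $\XXX$, Corollary~\ref{finite-formula} and Equation~\eqref{gk-by-basis} let me work with $\dim((\kk\XXX)^{\leq\nn})$; writing $\AAA=\AD$, I have $\gkd\DDD=\overline{{\lim\limits_{\nn\to\infty}}}\log_\nn\dim((\kk\XXX)^{\leq\nn})$ and $\gkd\AAA=\overline{{\lim\limits_{\nn\to\infty}}}\log_\nn|\BBB_\AAA^{\leq\nn}|$. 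Thus it suffices to produce a constant $C$, independent of $\nn$, with $\dim((\kk\XXX)^{\leq\nn})\leq C\,|\BBB_\AAA^{\leq\nn}|$ for all $\nn$; the factor $C$ then contributes $0$ to the limit superior and the inequality follows. Call a disequence $[\aa_1...\aa_\tt]_\pp$ \emph{restricted} if $\pp\leq\mm$ or $\tt-\pp\leq\mm-1$, so that the hypothesis says precisely that the set of all restricted disequences spans $\DDD$.

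The bound above would follow at once from the key fact that, for every $\tt$, the restricted disequences of length at most $\tt$ already span $(\kk\XXX)^{\leq\tt}$. Granting this, I would first reduce the two end parts of each spanning monomial: given a restricted $[\aa_1...\aa_\tt]_\pp$, its left factor $[\aa_1...\aa_{\pp-1}]_1$ sits to the left of a $\bl$ and its right factor $[\aa_{\pp+1}...\aa_\tt]_1$ sits to the right of a $\br$, so by Lemma~\ref{zero-divisor} both may be rewritten modulo the relations of $\AAA$ without changing the element in $\DDD$, and hence replaced by reduced words lying in $\BBB_\AAA$. This rewriting never increases the length and preserves restrictedness, so $(\kk\XXX)^{\leq\nn}$ is spanned by the set of monomials $[\ww_L\,\aa\,\ww_R]_{\ell(\ww_L)+1}$ with $\ww_L,\ww_R\in\BBB_\AAA$, $\aa\in\XXX$, total length at most $\nn$, and one of $\ww_L,\ww_R$ of length at most $\mm-1$. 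Each such monomial is determined by the triple $(\ww_L,\aa,\ww_R)$, and a union bound over the two cases gives at most $2\,|\BBB_\AAA^{\leq\mm-1}|\,|\XXX|\,|\BBB_\AAA^{\leq\nn}|$ of them: the short word ranges over a set of bounded cardinality, the middle letter over $\XXX$, and the remaining word over $\BBB_\AAA^{\leq\nn}$. This is exactly $\dim((\kk\XXX)^{\leq\nn})\leq C\,|\BBB_\AAA^{\leq\nn}|$ with $C=2\,|\BBB_\AAA^{\leq\mm-1}|\,|\XXX|$, completing the argument modulo the key fact.

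Proving the key fact is where I expect the main obstacle to lie: the hypothesis only supplies spanning by restricted disequences of \emph{a priori unbounded length}, whereas I need the spanning to respect the length filtration. I would attack it by induction on the length-middle-lexicographic order, using the factorization $[\aa_1...\aa_\tt]_\pp=[\aa_1...\aa_{\pp-1}]_1\bl\aa_\pp\br[\aa_{\pp+1}...\aa_\tt]_1$ from Lemma~\ref{product} to expose the deep middle, Lemma~\ref{zero-divisor} to normalize its flanks, and the hypothesis to rewrite a deep monomial in terms of restricted ones. The genuine difficulty is that a restricted disequence appearing in such a rewriting may be strictly longer than the monomial being reduced, hence larger in the ordering, so it cannot be absorbed by a naive induction.

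To control this I would pass to the associated graded dialgebra $\mathrm{gr}\DDD=\bigoplus_\ell(\kk\XXX)^{\leq\ell}/(\kk\XXX)^{\leq\ell-1}$, which is graded because the defining identities of a dialgebra are length-homogeneous, and invoke the standard fact that the symbols of any spanning set of a filtered vector space span its associated graded. This reduces the key fact to the purely homogeneous statement that, in each degree $\ell$, the restricted disequences of length exactly $\ell$ span the component $\mathrm{gr}_\ell\DDD$; once this holds for all $\ell$, lifting through the filtration returns the length-bounded spanning statement. The delicate residual point—which I expect to be the true heart of the proof—is to rule out that a restricted disequence of length greater than $\ell$ collapses into filtration degree $\ell$ and is indispensable there; I would settle this by a secondary downward induction on the length, again using the factorization and the zero-divisor reductions to express any such collapsed contribution through restricted disequences of the correct length. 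With the homogeneous spanning statement in hand, the counting step of the second paragraph yields $\gkd\DDD\leq\gkd\AD$, and together with Theorem~\ref{inequality} the equality $\gkd\DDD=\gkd\AD$ follows.
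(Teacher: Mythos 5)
Your second paragraph is essentially the paper's argument: the paper introduces the subset $\BBB_1$ of $\BBB$ consisting of those restricted disequences $[\aa_1...\aa_\nn]_\pp$ whose flanks $[\aa_1...\aa_{\pp-1}]_1$ and $[\aa_{\pp+1}...\aa_\nn]_1$ lie in $\BBB_{\AAA}$, reduces non-reduced flanks exactly as you do (Lemma~\ref{zero-divisor} to justify rewriting a flank modulo the relations of $\AAA$, Lemma~\ref{monomial-order} to see the result is smaller, and the observation that the rewriting does not increase length and stays restricted), and then counts $|\BBB_1^{\leq\nn}|\leq 2|\XXX|(|\BBB_{\AAA}^{\leq \nn-1}|+1)(|\BBB_{\AAA}^{\leq \mm-1}|+1)$, which is your constant $C$. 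So that half is correct and matches the paper.

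The genuine gap is the one you yourself flag: your ``key fact'' that the restricted disequences of length at most $\tt$ span $(\kk\XXX)^{\leq\tt}$ is never proved, and the repair you sketch does not work. The ``standard fact'' that the symbols of an arbitrary spanning set of a filtered vector space span the associated graded is false: a spanning set may consist entirely of elements of high filtration degree and then contributes nothing to the low graded pieces (e.g.\ $\{(1,1),(1,-1)\}$ spans $\kk^2$ filtered by $V_1=\kk\times\{0\}\subset V_2=\kk^2$, yet both symbols lie in $V_2/V_1$); this is precisely the ``collapse'' you are trying to exclude, so invoking it is circular. Likewise the ``secondary downward induction on the length'' has no anchor, since the hypothesis only produces restricted disequences of a priori unbounded length. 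For comparison, the paper sidesteps the issue by running its induction only over elements of $\BBB$, which are already restricted: there the rewriting $[\aa_1...\aa_\tt]_\pp=\sum_i\alpha_i[\uu_i]_{\pp_i}\bl[\aa_\pp...\aa_\tt]_1$ strictly decreases the monomial, does not increase length, and remains inside $\BBB$ because $\ell(\uu_i)\leq\pp-1$; it never attempts to rewrite a non-restricted disequence. (The paper is itself terse about the final passage from $|\BBB_1^{\leq\nn}|$ to $\dim((\kk\XXX)^{\leq\nn})$, but in both places the lemma is applied --- the proof of Theorem~B, where one shows $\BBB_\DDD\subseteq\BBB$, and Corollary~\ref{yb4}, where the rewriting into restricted form preserves length --- the length control your key fact is meant to supply is available from the application itself.) As written, then, your proposal leaves the central step unestablished, and the route proposed for it fails.
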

\begin{proof}
For simplicity, we denote~$\AD$ by~$\AAA$. Since~$\gkd{\AAA}\leq \gkd\DDD$, it suffices to show~$\gkd\DDD\leq \gkd{\AAA}$. Let~$\BBB_\AAA$ be the shortest-middle-lexicographic basis of~$\AAA$ with respect to~$\XXX$ and let~$\BBB_1$ be the subset of~$\BBB$ defined as follows
$$\BBB_1=\{[\aa_{1}...\aa_{\nn}]_{\pp}\in\BBB\mid [\aa_1...\aa_{\pp-1}]_1\in\BBB_\AAA \mbox{ if }\pp>1, \mbox{ and }[\aa_{\pp+1}...\aa_{\nn}]_1\in\BBB_\AAA \mbox{ if }\pp<\nn\}.$$ We claim that~$\BBB_1$ is a linear generating set of~$\DDD$. We use induction with respect to the length-middle-lexicographic ordering on~$\Xomega$ to show that every element~$[\aa_{1}...\aa_{\nn}]_{\pp}$ of~$\BBB$ can be written as a linear combination of elements of~$\BBB_1$. It is clear that~$\XXX$ is a subset of~$\BBB_1$. Without loss of generality, we assume that~$\pp>1$ and~$[\aa_1...\aa_{\pp-1}]_1\notin \BBB_\AAA$. The following proof is similar to Lemma~\ref{subword-in-a}. If~$[\aa_{1}...\aa_{\pp-1}]_{1}=0$ in~$\AAA$, then we have~$[\aa_{1}...\aa_{\nn}]_{\pp}=0$ in~$\DDD$. If~$[\aa_{1}...\aa_{\pp-1}]_{1}\neq 0$ in~$\AAA$, then we may
assume that~$[\aa_{1}...\aa_{\pp-1}]_{1}=\sum_{1\leq i\leq n}\alpha_{\ii}[\uu_i]_{\pp_i}$ in~$\AAA$ for some monomials~$[\uu_1]_{\pp_1}\wdots[\uu_n]_{\pp_n}$ in~$[{\XXX}^+]_\omega$ satisfying~$[\uu_i]_{\pp_i}<[\aa_{1}...\aa_{\pp-1}]_{1}$ for every integer~$i$ such that~$1\leq i\leq n$ and for some elements~$\alpha_1\wdots\alpha_n$ in the field~$\kk$. By Lemma~\ref{monomial-order}, we obtain~$[\uu_{\ii}]_{\pp_i}\bl[\aa_{\pp}\aa_{\pp+1}\cdots\aa_{\nn}]_{1}
<[\aa_1...\aa_n]_\pp$. Finally, we have
$$
[\aa_{1}...\aa_{\nn}]_{\pp}=[\aa_{1}...\aa_{\pp-1}]_{1}\bl[\aa_{\pp}\aa_{\pp+1}..\aa_{\nn}]_{1}
=\sum_{1\leq i\leq n}\alpha_{\ii}[\uu_{\ii}]_{\pp_i}\bl[\aa_{\pp}\aa_{\pp+1}\cdots\aa_{\nn}]_{1}.
$$
 Since~$\ell(\uu_i)\leq \pp-1<m$, each monomial~$[\uu_{\ii}]_{\pp_i}\bl[\aa_{\pp}\aa_{\pp+1}\cdots\aa_{\nn}]_{1}$ lies in~$\BBB$. By induction hypothesis, each monomial~$[\uu_{\ii}]_{\pp_\ii}\bl[\aa_{\pp}\aa_{\pp+1}\cdots\aa_{\nn}]_{1}$, and thus~$[\aa_1...\aa_n]_\pp$, can be written as a linear combination of elements of~$\BBB_1$.

 For every~$\tt\geq 1$, define~$\BBB_1^{\tt}=\{[\aa_1...\aa_t]_p \in \BBB_1\mid   \aa_1\wdots\aa_t \in \XXX, \pp\in~\mathbb{N}\}$, $\BBB_1^{\leq t}=\cup_{1\leq i\leq t}\BBB_1^i$ and define~$|\BBB_1^0|=1$. Then for every~$\tt\geq 1$, we have
$$
|\BBB_{1}^{\tt}|\leq\sum_{\substack{1\leq\pp\leq\mm;\\\pp\leq\tt}}|\BBB_{\AAA}^{\pp-1}||\XXX||\BBB_{\AAA}^{\tt-\pp}|
+\sum_{\substack{t-m+1\leq\pp\leq\tt;\\\pp\geq 1}}|\BBB_{\AAA}^{\pp-1}||\XXX||\BBB_{\AAA}^{\tt-\pp}|
=2\sum_{\substack{1\leq\pp\leq\mm;\\\pp\leq\tt}}|\BBB_{\AAA}^{\pp-1}||\XXX||\BBB_{\AAA}^{\tt-\pp}|,
 $$
and
\begin{align*}
&|\BBB_{1}^{\leq\nn}|=\sum\limits_{1\leq\tt\leq\nn}|\BBB_{1}^{\tt}|
\leq  2\sum_{1\leq\tt\leq\nn}\sum_{\substack{1\leq\pp\leq\mm;\\\pp\leq\tt }}|\BBB_{\AAA}^{\pp-1}||\XXX||\BBB_{\AAA}^{\tt-\pp}|
=2|\XXX|\sum_{1\leq\tt\leq\nn}\sum_{\substack{0\leq\ii\leq m-1;\jj\geq 0;\\ \ii+\jj=\tt-1}}|\BBB_{\AAA}^{\ii}||\BBB_{\AAA}^{\jj}|&
\\
=&2|\XXX|\sum_{\substack{0\leq\ii\leq\mm-1;\\\ii\leq \nn-1}}(|\BBB_{\AAA}^{i}|\sum_{0\leq j\leq n-1-i}
|\BBB_{\AAA}^{j}|)
\leq 2|\XXX|\sum_{0\leq\ii\leq\mm-1}(|\BBB_{\AAA}^{i}|(|\BBB_{\AAA}^{\leq n-1}|+|\BBB_{\AAA}^{0}|) )&
\\
\leq &2|\XXX|(|\BBB_{\AAA}^{\leq n-1}|+|\BBB_{\AAA}^{0}|)\sum_{0\leq\ii\leq\mm-1}|\BBB_{\AAA}^{i}|
\leq 2|\XXX|(|\BBB_{\AAA}^{\leq n-1}|+|\BBB_{\AAA}^{0}|)(|\BBB_{\AAA}^{\leq m-1}|+|\BBB_{\AAA}^{0}|).&
\end{align*}
Finally, we obtain
$$
\overline{{\lim\limits_{\nn\to \infty}}}\log_{\nn}  |\BBB_1^{\leq\nn}|
\leq\overline{{\lim\limits_{\nn\to \infty}}}\log_{\nn}(2|\XXX|(|\BBB_{\AAA}^{\leq n-1}|+|\BBB_{\AAA}^{0}|)(|\BBB_{\AAA}^{\leq m-1}|+|\BBB_{\AAA}^{0}|))
=\overline{{\lim\limits_{\nn\to \infty}}}\log_{\nn}|\BBB_{\AAA}^{\leq n-1}|.
$$
By Corollary~\ref{finite-formula}, we obtain~$\gkd\DDD\leq \gkd{\AAA}$.
\end{proof}

There are many dialgebras satisfying the assumptions of Lemma~\ref{special-basis}. In particular, certain identities ensure the validity of the assumptions of Lemma~\ref{special-basis}.

\begin{coro}\label{yb4}
Let~$\DDD$ be a dialgebra generated by a finite set~$\XXX$, and let~$\AAA_\DDD$ be the associative algebra associated to~$\DDD$. If~$\DDD$ satisfies one of the  identities:
\ITEM1 $\xx\bl\yy=\yy\bl\xx$ for all~$\xx, \yy\in\DDD$;
\ITEM2 $\xx\br\yy=\yy\br\xx$ for all~$\xx, \yy\in\DDD$;
\ITEM3 $\xx\bl\yy=\yy\br\xx$ for all~$\xx, \yy\in\DDD$,
then we have~$\gkd\DDD=\gkd{\AAA_\DDD}\leq|\XXX|$, in particular, $\gkd\DDD$ is a nonnegative integer.
\end{coro}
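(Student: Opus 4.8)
The plan is to handle all three identities uniformly by exhibiting, in each case, a linear generating set of~$\DDD$ of the exact shape demanded by Lemma~\ref{special-basis} with~$\mm=1$, and then to bound~$\gkd\AD$ using the fact that~$\AD$ becomes commutative. After fixing a well-ordering on the finite set~$\XXX$, both reductions are available to us.

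First I would dispose of the associated algebra. In each of the three cases the defining identity descends to~$\AD$, where~$\bl$ and~$\br$ collapse to a single product~$\cdot$; in all three cases it becomes~$\xx\cdot\yy=\yy\cdot\xx$, so~$\AD$ is a commutative associative algebra generated by the~$|\XXX|$ images of~$\XXX$. Hence~$\AD$ is a homomorphic image of the free commutative algebra on~$|\XXX|$ generators. Since the \GK\ never increases under homomorphic images and equals the Krull dimension for a finitely generated commutative algebra~\cite[Theorem 4.5]{book}, this gives that~$\gkd\AD$ is a nonnegative integer with~$\gkd\AD\le|\XXX|$.

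The heart of the argument is to prove that the set~$\BBB$ of all monomials~$[\aa_1\cdots\aa_\nn]_\pp$ over~$\XXX$ with~$\pp=1$ or~$\pp=\nn$ is a linear generating set of~$\DDD$, which is precisely the hypothesis of Lemma~\ref{special-basis} for~$\mm=1$. Since~$\Xomega$ already generates~$\DDD$ linearly, it suffices to rewrite an arbitrary~$[\aa_1\cdots\aa_\nn]_\pp$ as a single monomial with~$\pp\in\{1,\nn\}$. The key device is to peel off one extreme letter via Lemma~\ref{product} so that the defining identity can be applied to the \emph{outermost} product; applying it to an inner subproduct does not move the middle entry. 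Concretely: in case (i) with~$\pp\ge2$ I would write~$[\aa_1\cdots\aa_\nn]_\pp=\aa_1\bl[\aa_2\cdots\aa_\nn]_{\pp-1}$ and use~$\xx\bl\yy=\yy\bl\xx$ to obtain~$[\aa_2\cdots\aa_\nn]_{\pp-1}\bl\aa_1=[\aa_2\cdots\aa_\nn\aa_1]_\nn$, an all-left monomial; in case (ii) with~$\pp\le\nn-1$ I would write~$[\aa_1\cdots\aa_\nn]_\pp=[\aa_1\cdots\aa_{\nn-1}]_\pp\br\aa_\nn$ and use~$\xx\br\yy=\yy\br\xx$ to obtain~$\aa_\nn\br[\aa_1\cdots\aa_{\nn-1}]_\pp=[\aa_\nn\aa_1\cdots\aa_{\nn-1}]_1$, an all-right monomial; and in case (iii) with~$\pp\ge2$ I would write~$[\aa_1\cdots\aa_\nn]_\pp=\aa_1\bl[\aa_2\cdots\aa_\nn]_{\pp-1}$ and use~$\xx\bl\yy=\yy\br\xx$ to obtain~$[\aa_2\cdots\aa_\nn]_{\pp-1}\br\aa_1=[\aa_2\cdots\aa_\nn\aa_1]_{\pp-1}$, which lowers the middle index by one, so iterating drives it down to~$\pp=1$. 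In every case the monomials left untouched (those with~$\pp=1$ in cases (i),(iii), and those with~$\pp=\nn$ in case (ii)) already lie in~$\BBB$, so~$\BBB$ generates~$\DDD$ linearly.

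With~$\BBB$ in hand, Lemma~\ref{special-basis} applied with~$\mm=1$ yields~$\gkd\DDD=\gkd\AD$, and together with the second paragraph this gives~$\gkd\DDD=\gkd\AD\le|\XXX|$ with~$\gkd\DDD$ a nonnegative integer, as claimed. I expect the only delicate point to be the bookkeeping of middle indices under Lemma~\ref{product}: one must peel the letter off the correct end (the left for~$\bl$, the right for~$\br$) so that the middle entry sits in the factor the surviving operation points to, which is exactly what makes a single application of the identity land the monomial in~$\BBB$. Everything else is a routine verification.
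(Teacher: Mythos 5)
Your proposal is correct and follows essentially the same route as the paper: exhibit the linear generating set of monomials with middle index $\pp\in\{1,\nn\}$, invoke Lemma~\ref{special-basis} with $\mm=1$, and conclude via the commutativity of $\AAA_\DDD$. The only cosmetic difference is that the paper swaps the two factors split at the middle entry in a single step (treating case \ITEM1 as representative), whereas you peel off one outer letter and, in case \ITEM3, iterate; both rewritings are valid instances of Lemma~\ref{product}.
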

\begin{proof}
 We shall show that, if one of the identities~\ITEM1-\ITEM3 holds, then the assumptions of Lemma~\ref{special-basis} are valid. Without loss of generality, assume that identity~\ITEM1 holds in~$\DDD$. Then for every~$[\aa_{1}...\aa_{\tt}]_p$  in~$\DDD$ such that~$\pp>1$,
 we have
 $$
 [\aa_{1}...\aa_{\tt}]_p=[\aa_{1}...\aa_{\pp-1}]_{\pp-1} \bl [\aa_\pp...\aa_\tt]_1
= [\aa_\pp...\aa_\tt]_1 \bl [\aa_{1}...\aa_{\pp-1}]_{\pp-1} =[\aa_\pp...\aa_\tt\aa_{1}...\aa_{\pp-1}]_\tt.
$$ In particular, the set~$\BBB_1=\{[\aa_{1}...\aa_{\nn}]_{\pp}\mid\aa_{1}\wdots\aa_\nn\in\XXX$,  $\pp=1$ or~$\pp=n$, $\nn\in \mathbb{N}\}$ is a linear generating set of~$\DDD$. By Lemma~\ref{special-basis}, we obtain~$\gkd\DDD=\gkd{\AAA_\DDD}$.
Moreover, since~$\AAA_\DDD$ is a commutative algebra generated by~$\XXX$, the Gelfand-Kirillov dimension~$\gkd{\AAA_\DDD}$ is a nonnegative integer satisfying~$\gkd{\AAA_\DDD}\leq |\XXX|$.
\end{proof}

Recall that a dialgebra~$(\DDD,\vdash,\dashv)$ is commutative~\cite{Zhuchok} if both~$\vdash$ and $\dashv$ are commutative, that is, for all~$\xx$ and~$\yy$ in~$\DDD$, we have~$\xx\bl\yy=\yy\bl\xx$ and~$\xx\br\yy=\yy\br\xx$.

\begin{coro}
Let~$\DDD$ be a finitely generated commutative dialgebra. Then~$\gkd\DDD$ is a nonnegative integer number. In particular, if~$\DDD$ is a free commutative dialgebra generated by~$\nn$ letters, then we have~$\gkd\DDD=\nn$.
\end{coro}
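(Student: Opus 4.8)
The plan is to derive both claims from Corollary~\ref{yb4}, supplemented by an identification of the associated associative algebra in the free case.

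First I would note that a commutative dialgebra satisfies, in particular, the identity $\xx\bl\yy=\yy\bl\xx$, i.e.\ condition~\ITEM1 of Corollary~\ref{yb4}. Since $\DDD$ is finitely generated, that corollary applies verbatim and gives $\gkd\DDD=\gkd{\AAA_\DDD}$, a nonnegative integer bounded by the number of generators. This already proves the first assertion.

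For the ``in particular'' part, let $\DDD$ be the free commutative dialgebra on $\nn$ letters $\XXX$. The crux is to show that $\AAA_\DDD$ is the free commutative associative algebra on $\XXX$; granting this, the introduction's fact that the \GK\ of the free commutative algebra on $\nn$ generators equals $\nn$ yields $\gkd\DDD=\gkd{\AAA_\DDD}=\nn$. To identify $\AAA_\DDD$, I would first observe that $\AAA_\DDD$ is commutative: the relations $\xx\bl\yy=\yy\bl\xx$ hold in $\DDD$ and hence in $\AAA_\DDD$, where $\bl$ and $\br$ coincide with the associative product. Then I would verify the universal property directly. Given any commutative associative algebra $\CCC$ and any map $\phi\colon\XXX\to\CCC$, regard $\CCC$ as a commutative dialgebra by setting both $\bl$ and $\br$ equal to its product; by freeness of $\DDD$, the map $\phi$ extends uniquely to a dialgebra homomorphism $\DDD\to\CCC$. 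Because $\bl=\br$ in $\CCC$, this homomorphism annihilates the set $\SS$ and thus factors uniquely through a homomorphism $\AAA_\DDD\to\CCC$, which, $\XXX$ being a generating set, is the unique algebra map extending $\phi$. This is precisely the universal property defining the free commutative associative algebra on $\XXX$.

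I expect the identification of $\AAA_\DDD$ to be the only real obstacle; it is a short adjunction-type argument once one is careful to record that commutativity of $\bl$ descends to $\AAA_\DDD$ and that a commutative associative algebra, viewed under $\bl=\br$, is a commutative dialgebra. After that step the \GK\ computation is immediate from the introduction. A basis-theoretic alternative would be to exhibit an explicit linear basis of $\AAA_\DDD$ consisting of commutative monomials in $\XXX$ and to check that no further collapse occurs, but the universal-property route avoids this bookkeeping and is cleaner.
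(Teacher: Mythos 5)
Your proposal is correct and follows exactly the route the paper intends: the paper states this corollary without proof as an immediate consequence of Corollary~\ref{yb4}\ITEM1 together with the fact (recalled in the introduction) that the free commutative associative algebra on $\nn$ generators has Gelfand--Kirillov dimension $\nn$. Your universal-property identification of $\AAA_\DDD$ with the free commutative associative algebra correctly supplies the one detail the paper leaves implicit.
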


Now we shall conclude the paper with our main result (Theorem B of the introduction), which is based on Bergman's corresponding result for associative algebras.
\begin{theorem}
No dialgebra has \GK\ strictly between 1 and 2.
\end{theorem}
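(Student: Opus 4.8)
The plan is to combine Theorem~\ref{inequality} with Bergman's gap theorem~\cite{Bergman} for associative algebras and reduce everything to the single delicate case $\gkd\AD=1$. First I would pass to the finitely generated situation. By Corollary~\ref{finite-formula} we have $\gkd\DDD=\sup_{\DDD'}\gkd{\DDD'}$, the supremum ranging over finitely generated subalgebras $\DDD'$ of $\DDD$, and the open interval $(1,2)$ cannot be produced as a supremum of values avoiding it: if every $\gkd{\DDD'}\notin(1,2)$ but $s:=\sup_{\DDD'}\gkd{\DDD'}\in(1,2)$, then picking $\DDD'$ with $\gkd{\DDD'}>s-\varepsilon$ for $\varepsilon<s-1$ forces $\gkd{\DDD'}\in(1,s]\subseteq(1,2)$, a contradiction. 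Hence it suffices to treat $\DDD$ generated by a finite well-ordered set $\XXX$, so that $\AD$ is a finitely generated associative algebra. By Theorem~\ref{inequality} we have $\gkd\AD\leq\gkd\DDD\leq2\gkd\AD$, and by Bergman $\gkd\AD\in\{0,1\}\cup[2,\infty]$. If $\gkd\AD=0$ then $\gkd\DDD\leq0$, and if $\gkd\AD\geq2$ then $\gkd\DDD\geq2$; in both cases $\gkd\DDD\notin(1,2)$. So the whole statement rests on the case $\gkd\AD=1$, where a priori only $\gkd\DDD\in[1,2]$ is known.

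For $\gkd\AD=1$ I would work with the simultaneous bases $\BBB_\DDD$ and $\BBB_\AAA$, where $\AAA=\AD$, and with the bridge supplied by Lemma~\ref{subword-in-a}: every $[\aa_1\dots\aa_n]_\pp\in\BBB_\DDD$ factors through $\BBB_\AAA$, in that its left part $[\aa_1\dots\aa_{\pp-1}]_1$ (when $\pp>1$) and its right part $[\aa_{\pp+1}\dots\aa_n]_1$ (when $\pp<n$) lie in $\BBB_\AAA$. Since the restriction of the length-middle-lexicographic ordering to the monomials $[\uu]_1$ is just the length-lexicographic ordering on words, $\BBB_\AAA$ is closed under taking factors; and linear growth $\gkd\AAA=1$ forces the per-length count $|\BBB_\AAA^n|$ to be bounded and the long words of $\BBB_\AAA$ to be essentially periodic, a standard consequence of Bergman's analysis together with the Morse--Hedlund structure of languages of linear complexity. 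With this in hand I would split according to the position of the middle entry (Definition~\ref{defi-mid-ent}) in the basis monomials of $\DDD$.

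In the first branch, suppose there is an integer $m$ with $\pp\leq m$ or $n-\pp\leq m-1$ for every $[\aa_1\dots\aa_n]_\pp\in\BBB_\DDD$, i.e.\ the middle entry always lies within $m$ of an end. Then $\BBB_\DDD$ is contained in the generating set of Lemma~\ref{special-basis}, whose hypothesis is thereby met, and we conclude $\gkd\DDD=\gkd\AD=1$. In the opposite branch there are basis monomials with both sides arbitrarily long; here I would use the bounded complexity of $\BBB_\AAA$ to extract periodicity in the long left and right parts and, by sliding the middle entry through a periodic region (the very mechanism that makes the free dialgebra on one letter satisfy $\gkd\DDD=2$ while $\gkd\AD=1$), produce on the order of $n$ distinct basis monomials of each length $n$. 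This yields $|\BBB_\DDD^{\leq n}|\gtrsim n^2$ and hence $\gkd\DDD=2$ by Equation~\eqref{gk-by-basis} and Theorem~\ref{inequality}. Either way $\gkd\DDD\in\{1,2\}$, and the supremum reduction of the first paragraph upgrades this to an arbitrary dialgebra, completing the proof.

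The hard part is the second branch: turning \emph{arbitrarily balanced} basis monomials into \emph{genuinely quadratic} growth. The subtlety is that fixing the underlying word and merely shifting the middle entry need not yield elements of $\BBB_\DDD$, since a shifted middle may well be reducible; one must show that enough shifted positions survive as basis monomials. This is exactly where a careful study of the middle submonomials of $\BBB_\DDD$, combined with the eventual periodicity coming from $|\BBB_\AAA^n|$ being bounded, is indispensable, and it is the technical point that separates the dialgebra statement from Bergman's associative one.
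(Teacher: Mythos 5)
Your reductions are sound up to the final branch: passing to finitely generated subalgebras via Corollary~\ref{finite-formula}, invoking Theorem~\ref{inequality} together with Bergman's gap theorem to isolate the case $\gkd\AD=1$, and disposing of your first branch (middle entry uniformly within $m$ of an end for all of $\BBB_\DDD$) by Lemma~\ref{special-basis} are all correct, and the dichotomy between the two branches is exhaustive. But the second branch is not a proof --- it is a restatement of the difficulty, as you yourself acknowledge. You assert that $\gkd\AD=1$ forces $|\BBB_{\AAA}^{n}|$ to be bounded and the long words of $\BBB_\AAA$ to be eventually periodic, and that sliding the middle entry through a periodic region produces on the order of $n$ surviving basis monomials of each length; none of this is established, and the obstruction you name explicitly (a shifted middle entry may be reducible, so the shifted monomial need not lie in $\BBB_\DDD$) is precisely what you would have to overcome. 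As written, the case $\gkd\AD=1$ with arbitrarily balanced basis monomials --- which is the entire content of the theorem beyond Bergman --- remains open in your argument.

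The paper closes exactly this gap by a more economical device that needs neither periodicity nor the hypothesis $\gkd\AD=1$. Assume $\DDD$ is finitely generated with $\gkd\DDD<2$. By pure counting there is an $m$ with $|\BBB_\DDD^{m}|<m$, since otherwise $|\BBB_\DDD^{\leq n}|\geq 1+2+\dots+n$ for all $n$. Now take any $[\aa_1\dots\aa_t]_p\in\BBB_\DDD$ with $t>2m$ and $m<p<t-m+1$, and consider the $m$ middle submonomials of length $m$ obtained by sliding a window of length $m$ across the middle entry, namely $[\uu_j]_{m-j}$ for $0\leq j\leq m-1$. Because $|\BBB_\DDD^{m}|<m$, these $m$ elements together with $\BBB_\DDD^{\leq m-1}$ are linearly dependent, so the largest one appearing can be rewritten as a combination of strictly smaller monomials; multiplying back on the left with $\bl$ and on the right with $\br$ (using Lemma~\ref{product} and Lemma~\ref{monomial-order} to keep the length-middle-lexicographic order under control) rewrites $[\aa_1\dots\aa_t]_p$ itself as a combination of strictly smaller monomials, contradicting $[\aa_1\dots\aa_t]_p\in\BBB_\DDD$. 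Hence $\gkd\DDD<2$ already forces your first branch, Lemma~\ref{special-basis} gives $\gkd\DDD=\gkd\AD$, and Bergman finishes. If you want to salvage your plan, replace the periodicity argument in the second branch by this pigeonhole-plus-rewriting argument on middle submonomials of a single well-chosen length $m$.
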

\begin{proof}
We first assume that~$\DDD$ is a dialgebra generated by a finite well-ordered set~$\XXX$ satisfying the inequality~$\gkd\DDD<2$, and we shall show that~$\gkd\DDD=\gkd{\AAA_\DDD}$.
Let~$\BBB_\DDD$ be the shortest-middle-lexicographic basis of~$\DDD$ with respect to~$\XXX$. Then for some positive integer~$\mm$, we have~$|\BBB_\DDD^{\mm}|<\mm$,  for otherwise we would have~$|\BBB_\DDD^{\leq\nn}|\geq1+2+\cdots+\nn$ for all~$\nn$ and
thus~$\gkd\DDD\geq2$. Define
$$\BBB=\{[\aa_{1}...\aa_{\tt}]_{\pp}\mid\aa_{1}\wdots\aa_\tt\in\XXX,\pp,\tt\in\mathbb{N}, \pp\leq \tt, 1\leq \pp\leq\mm \mbox{ or } 0\leq \tt-\pp\leq\mm-1\}.$$ We shall show that~$\BBB_\DDD$ is a subset of~$\BBB$ and thus by Lemma~\ref{special-basis} we deduce~$\gkd\DDD=\gkd{\AAA_\DDD}$.

For every~$[\aa_{1}...\aa_{\tt}]_{\pp}$ in~$\BBB_\DDD$, if~$\tt\leq 2m$, then it is clear that~$[\aa_{1}...\aa_{\tt}]_{\pp}$ lies in~$\BBB$. Assume now that~$\tt>2m$ and~$\mm<\pp< t-m+1$, then we deduce~$\pp+\mm-1<\tt$ and~$\pp-(\mm-1)>1$. Therefore, for every integer~$\jj$ such that~$0\leq j\leq \mm-1$, the monomial
$$
[\uu_\jj]_{\mm-\jj}:
=[\aa_{p+\jj-(\mm-1)}\aa_{p+\jj-(\mm-1)+1}...\aa_{\pp+\jj}]_{\mm-\jj}
$$
is a middle submonomial of~$[\aa_{1}...\aa_{\tt}]_{\pp}$, and we have~$[\uu_{\ii}]_{\mm-\ii}<[\uu_{\jj}]_{\mm-\jj}$ if~$\ii>\jj$. Since~${|\BBB_\DDD^{\mm}|<\mm}$,   the set~$\{[\uu_{\jj}]_{\mm-\jj}\mid 0\leq \jj\leq m-1\} \cup\BBB_\DDD^{\leq\mm-1}$ is linear dependent in~$\DDD$.
In other words, for some integer~$\nn$ such that~$0\leq \nn\leq \mm-1$, we have
$$
[\uu_{\nn}]_{\mm-\nn}
=-\sum_{n+1\leq\jj\leq \mm-1}\alpha_{\jj}[\uu_{\jj}]_{\mm-\jj}
-\sum_{1\leq\ii\leq\ll}\beta_{\ii}[\ww_{\ii}]_{\qq_{\ii}},$$
for some elements~$\alpha_{n+1}\wdots\alpha_{\mm-1},\beta_{1}\wdots\beta_{\ll}$ in~$\kk$ and for some
monomials~$[\ww_{1}]_{\qq_{1}}\wdots[\ww_{\ll}]_{\qq_{\ll}}$
in~$[\XXX^+]_{\omega}$ such that~$[\ww_{\ii}]_{\qq_{\ii}}<[\uu_{\nn}]_{\mm-\nn}$ for every~$\ii\leq \ll$.
Finally, we obtain
\begin{multline}\label{eqrewrite}
[\aa_{1}...\aa_{\tt}]_{\pp}
=-\sum_{n+1\leq\jj\leq \mm-1}\alpha_{\jj}([\aa_1...\aa_{\pp+\nn-(\mm-1)-1}]_1\bl
[\uu_{\jj}]_{\mm-\jj})
\br[\aa_{\pp+\nn+1}...\aa_{\tt}]_{1}
\\
-\sum_{1\leq\ii\leq\ll}\beta_{\ii}([\aa_1...\aa_{\pp+\nn-(\mm-1)-1}]_1\bl
[\ww_{\ii}]_{\qq_{\ii}})\br[\aa_{\pp+\nn+1}...\aa_{\tt}]_{1}.
\end{multline}
Since each~$[v]_q$ on the right hand side of Equation~\eqref{eqrewrite} satisfies~${[v]_q<[\aa_{1}...\aa_{\tt}]_{\pp}}$, we obtain a contradiction with the definition of~$\BBB_\DDD$.
So we have~$\BBB_\DDD\subseteq \BBB$ and~$\gkd\DDD=\gkd{\AAA_\DDD}$. Finally, the inequality~$1\leq \gkd\DDD<2$ forces~$\gkd\DDD=\gkd{\AAA_\DDD}=1$.

Assume now that~$\DDD$ is not finitely generated. It is clear that for every finitely generated subalgebra~$\DDD'$ of~$\DDD$, we have~$\gkdim(\DDD')\leq \gkd\DDD<2$. By the above reasoning, we deduce~$\gkdim(\DDD')=0$ or~$1$. Moreover, by Corollary~\ref{finite-formula}, we obtain~$\gkd\DDD\leq 1$.  Finally, the inequality~$1\leq \gkd\DDD<2$ forces~$\gkd\DDD=1$.
\end{proof}

For summary, the set of all the possible Gelfand--Kirillov dimensions of dialgebras is~$\{0,1\}\cup[2, \infty)$.

What remains open in view of the previous results is the question of whether the boundaries of inequation~\eqref{boundary} are the only possible values of the \GK\  of a dialgebra, that is, can one show that either~$\gkd\DDD=\gkd\AD$ or~$\gkd\DDD=2\gkd\AD$ holds? We have a feeling that this is true, but so far, we have no proof in this direction.

\end{document}